\newcommand{\comment}[1]{}
\newcounter{rea}
\newcounter{rek}
\newcounter{res}
\begin{document}
\allowdisplaybreaks
\title[Multilinear Hardy-Ces\`{a}ro Operator and Commutator]{Multilinear Hardy-Ces\`{a}ro Operator and Commutator on the product of Morrey-Herz spaces}     

\author{Nguyen Minh CHUONG}    
\address{Institute of mathematics, Vietnamese  Academy of Science and Technology,  Hanoi, Vietnam.} 
\email{{\tt nmchuong@math.ac.vn}}
\author{Nguyen Thi HONG}    
\address{Hanoi Metropolitan University, 98 Duong Quang Ham, Hanoi, Vietnam.} 
\email{ nthong@hnmu.vn}
\author{$\text{Ha Duy HUNG}^\star$} 
\address{High School for Gifted Students,
Hanoi National University of Education, 136 Xuan Thuy, Hanoi, Vietnam} 
\email{{\tt hunghaduy@gmail.com}}
\thanks{This paper is supported by Vietnam National Foundation for Science and Technology Development (NAFOSTED) under grant number 101.02 - 2014.51.}
\thanks{$^\star$ Corresponding author.}
 
\keywords{Hardy - Ces\`{a}ro operator; Hardy-Littlewood average operator; Morrey-Herz spaces.}
\subjclass[2010]{42B35 (46E30, 42B15, 42B30)}

\begin{abstract} We obtain sufficient and necessary conditions on weight functions $s_1(t),\ldots,s_m(t)$ and $\psi(t)$ so that 
the weighted multilinear Hardy-Ces\`{a}ro operator 
\[(f_1,\ldots,f_m)\mapsto \int_{[0,1]^n}\left(\prod_{k=1}^nf_k\left(s_k(t) x\right)\right)\psi(t)dt
\]
is bounded from $\dot{K}^{\alpha_1, p_1}_{q_1}(\omega_1)\times \cdots  \times\dot{K}^{\alpha_m, p_m}_{q_m}(\omega_m)$ to $\dot{K}^{\alpha, p}_{q}(\omega)$ and from $M\dot{K}^{\alpha_1, \lambda_1}_{p_1,q_1}(\omega_1)\times \cdots  \times M\dot{K}^{\alpha_m, \lambda_m}_{p_m,q_m}(\omega_m)$ to $M\dot{K}^{\alpha, \lambda}_{p,q}(\omega)$. The sharp bounds are also obtained and these results hold for both cases $0<p<1$ and $1\leq p<\infty$. We give a sufficient condition so that if symbols $b_1,\ldots,b_m$ are Lipschitz, then the commutator of the weighted Hardy-Ces\`{a}ro operator
\[
(f_1,\ldots,f_m)\mapsto\int_{[0,1]^n}\left(\prod\limits_{k=1}^mf_k\left(s_k(t)x\right)\right)\left(\prod_{k=1}^m\left(b_k(x)-b_k\left(s_k(t)x\right)\right)\right)\psi(t)dt\]
is bounded from $M\dot{K}^{\alpha_1, \lambda_1}_{p_1, q_1}(\omega_1)\times \cdots \times M\dot{K}^{\alpha_m, \lambda_m}_{p_m, q_m}(\omega_m)$ to $M\dot{K}^{\alpha^\prime, \lambda}_{p, q}(\omega)$ for both cases $0<p<1$ and $1\leq p<\infty$. By these we extend and strengthen previous results deu to Tang, Xue, and Zhou \cite{TXZ}.
\end{abstract}

\maketitle
\newtheorem{theorem}{Theorem}[section]
\newtheorem{lemma}{Lemma}[section]
\newtheorem{proposition}{Proposition}[section]
\newtheorem{remark}{Remark}[section]
\newtheorem{corollary}{Corollary}[section]
\newtheorem{definition}{Definition}[section]
\newtheorem{example}{Example}[section]
\numberwithin{equation}{section}
\newtheorem{Theorem}{Theorem}[section]
\newtheorem{Lemma}{Lemma}[section]
\newtheorem{Proposition}{Proposition}[section]
\newtheorem{Remark}{Remark}[section]
\newtheorem{Corollary}{Corollary}[section]
\newtheorem{Definition}{Definition}[section]
\newtheorem{Example}{Example}[section]

\section{Introduction}

Let $\psi:(0,1]\to\mathbb R_{\ge0}$ then the weighted Hardy-Littlewood average operator $U_\psi$ is defined as the following
\begin{equation}\label{sec1eq1}
U_\psi f(x)=\int\limits_0^1f(tx)\psi(t)dt,\qquad x\in\mathbb R^d.
\end{equation}
In 2001, J. Xiao \cite{Xiao} obtained an interesting result that $U_\psi$ is bounded on $L^p(\mathbb R^d)$, with $1<p<\infty$, if and only if 
\begin{equation} \label{sec1eq2}
\int\limits_0^1t^{-d/p}\psi(t)dt<\infty.
\end{equation}
Moreover, the corresponding operator norm is
\begin{equation}\label{sec1eq3}
\|U_\psi\|_{L^p}=\int\limits_0^1t^{-d/p}\psi(t)dt.
\end{equation}
In case $\psi\equiv1$, $U_\psi$ reduces to the classical Hardy operator $S$, where $$Sf(x)=\frac1x\int_0^xf(t)dt.$$
Thus Xiao's result reduces the classical Hardy's integral inequality: if $f\ge0$, $1<p<\infty$ then
\[
\int_0^\infty\left(\frac1x\int_0^xf(t)dt\right)^pdx\leq \left(\frac p{p-1}\right)^p\int_0^\infty f(x)^pdx,
\]
and the constant $\left(\dfrac p{p-1}\right)^p$ is the best possible.
\vskip12pt
Xiao's results has been followed by a vast amount of research geared towards understanding the weighted Hardy-Littlewood average operators $U_\psi$, see for examples \cite{CH,FLL,FL,FL1,FL2,GFM,LF,TXZ} and the references therein. These includes the work of Chuong and Hung in \cite{CH}, where they introduced a class of integral transforms as follows.
\begin{definition}\label{def1} Let $\psi:[0,1]\to[0,\infty)$, $s:[0,1]\to\mathbb R$  be measurable functions. The weighted Hardy-Ces\`{a}ro operator $U_{\psi,s}$, associated to the parameter curve $s(x,t):=s(t)x$, is defined by
\begin{equation}\label{sec1eq4}
U_{\psi,s}f(x)=\int_0^1f\left(s(t) x\right)\psi(t)dt,
\end{equation}
for all measurable complex valued functions $f$ on $\mathbb R^d$.
\end{definition}
It turns out that such operators are still keeping almost all nice properties as the weighted Hardy-Littlewood average operators. With certain conditions on functions $s$ and $\omega$, the authors \cite{CH} proved $U_{\psi,s}$ is bounded on weighted Lebesgue spaces and weighted $BMO$ spaces. The corresponding operator norms are worked out too. The authors also give a necessary condition on the weight function $\psi$, for the boundedness of the commutators of operator $U_{\psi,s}$ on weighted Lebesgue spaces and BMO spaces, with homogeneous weights.
\vskip 12pt
Recently, Hung and Ky \cite{HK} introduced the weighted multilinear Hardy-Ces\`{a}ro operator which was defined as following
\begin{definition}\label{def2} Let $m, n \in \mathbb N, \psi:[0,1]^n \to[0,\infty)$, $s_1,\ldots,s_m :[0,1]^n \to\mathbb R$  be measurable functions. Given $f_1,\ldots, f_m: \mathbb R^d \to \mathbb C$ be measurable functions. 
 The weighted multilinear Hardy-Ces\`{a}ro operator $U^{m,n}_{\psi,\vec{s}}$, is defined by
\begin{equation}\label{sec1eq5}
U^{m,n}_{\psi,\vec{s}}\left(f_1,\ldots, f_m\right)(x)=\int_{[0,1]^n}\left(\prod_{k=1}^nf_k\left(s_k(t) x\right)\right)\psi(t)dt,
\end{equation}
where $\vec{s}=\left(s_1,\ldots,s_m\right).$
\end{definition}
When $s_k(t)\equiv t$, and $m=n$, $U^{m,n}_{\psi,\vec{s}}$ turns to $\mathcal H^m_\psi$ which was introduced by Fu etc. \cite{FGL}. In \cite{HK}, the authors obtain the sharp bounds of $U^{m,n}_{\psi,\vec{s}}$ on the product of Lebesgue spaces and central Morrey spaces. More details, under certain conditions, they proved that $U^{m,n}_{\psi,\overrightarrow{s}}$ is bounded from $L^{p_1}_{\omega_1}\left(\mathbb R^d\right)\times\cdots\times L^{p_m}_{\omega_m}\left(\mathbb R^d\right)$ to $L^{p}_{\omega}\left(\mathbb R^d\right)$ if and only if 
\begin{equation}\label{maineq1}
{\mathcal A}=\int_{[0,1]^n}\left(\prod_{i=1}^m|s_i(t)|^{-\frac{d+\gamma_i}{p_i}}\right)\psi(t)dt<\infty.
\end{equation}
Furthermore, 
\begin{equation}\label{sec1eq7}
\left\|U^{m,n}_{\psi,\overrightarrow{s}}\right\|_{L^{p_1}_{\omega_1}\left(\mathbb R^d\right)\times\cdots\times L^{p_m}_{\omega_m}\left(\mathbb R^d\right)\to L^{p}_{\omega}\left(\mathbb R^d\right)}={\mathcal A}.
\end{equation}

They also proved sufficient and necessary conditions of the weighted functions so that the commutators of $U^{m,n}_{\psi,\vec{s}}$ (with symbols in central BMO spaces) are bounded on the product of central Morrey spaces. Their results extends known results obtained by Fu, Gong, Lu and Yuan in \cite{FGL} and by Chuong, Hung in \cite{CH}.
\vskip12pt
It is well known that Herz and Morrey-Herz spaces are natural generalisations of Lebesgue spaces with power weights (see definitions in Section 2). So it is natural to study the boundedness and bounds of $U^{m,n}_{\psi,\vec{s}}$ on these functional spaces. Such problems for the weighted Hardy-Littlewood average operator $U_\psi$ are studied in \cite{FL,LF}. Results for the boundedness and bounds of $H^m_\psi$ on the product of Morrey- Herz spaces, was recently obtained by Gong, Fu and Ma in \cite{GFM}. In this paper, we obtain necessary and sufficient conditions for the weighted boundedness of the Hardy-Ces\`{a}ro operators for the product of Herz and Morrey-Herz spaces. In each cases, the estimates for operator norms are worked out. 

\vskip12pt 
On the other hand, recently Tang, Xue, and Zhou \cite{TXZ} find a sufficient condition on weights so that $U_\psi$ is bounded on Morrey-Herz spaces. In this paper we make use of another approach which allows us to obtain a sufficient condition in terms of a finite integrals on $\psi$ and $s_1,\ldots,s_m$ such that $U^{m,n,\vec b}_{\psi,\vec{s}}$ (see definition in Section 4) is bounded on the product of Morrey-Herz spaces. When reduce to the case of $U_\psi$, we will show that our sufficient condition is weaker than one obtained in \cite{TXZ}. 
\vskip12pt
 Our paper is organized as follows. In Section 2 we give necessary preliminaries on Morrey-Herz spaces and on a class of homogeneous weights. In Section 3, we prove the theorems on the bounds and boundedness of multilinear Hardy-Ces\`{a}ro operator in the product of Morrey-Herz spaces. In Section 4, we give a sufficient condition on weights such that the commutator of  $U^{m,n}_{\psi,\vec{s}}$, with symbols are Lipschitz, is bounded on the product of Morrey-Herz spaces.
 
\section{Preliminaries}
\vskip12pt
Throughout this paper $\omega(x)$ will be denote a nonnegative measurable function on $\mathbb R^d$, and $L^q_\omega(\mathbb R^d)$ be the space of all measurable functions $f$ such that
 \begin{equation*}
 \|f\|_{q,\omega}=\left( \int\limits_{\mathbb R^d}|f(x)|^q\omega(x)dx\right)^{1/q}<\infty.
\end{equation*}
\vskip1pt
In the following definitions $\chi_k=\chi_{C_k}; C_k=B_k\setminus B_{k-1}$ and $B_k=\{x\in \mathbb R^d: |x| \leq 2^k\}$, for $k\in \mathbb Z$, where $\chi_E$ is the characteristic function of a set $E$.
\begin{definition}\label{sec2def1} Let $\alpha \in \mathbb R, 0<p\leq \infty, 0< q\leq \infty.$ The weighted homogeneous Herz-type space $\dot{K}^{\alpha, p}_q(\omega)$ is defined by
\begin{equation}\label{sec2eq1}
\dot{K}^{\alpha, p}_q(\omega)=\{f\in L^q_{\text{loc}}(\mathbb R^d\setminus \{0\},\omega): \|f\|_{\dot{K}^{\alpha, p}_q(\omega)}<\infty\},
\end{equation}
where 
\begin{equation}\label{sec2eq2}
\|f\|_{\dot{K}^{\alpha, p}_q(\omega)}=\left\{\sum\limits_{k=-\infty}^{\infty}2^{k\alpha p}||f\chi_k||_{q,\omega}^p\right\}^{1/p}.
\end{equation}
(The usual modifications are made when $p=\infty$ and/or $q=\infty$.) 
\end{definition}

\begin{definition}\label{sec2def2} Let $\alpha \in \mathbb R$, $0<p \leq \infty$, $0<q\leq \infty$, $\lambda \geq 0$ and $\omega $ be non-negative weighted function. The homogeneous weighted Morrey-Herz-type space $M\dot{K}^{\alpha, \lambda}_{p, q}(\omega)$ is defined by
\begin{equation}\label{sec2eq3}
M\dot{K}^{\alpha, \lambda}_{p, q}(\omega)=\{f\in L^q_\text{loc}(\mathbb R^d\setminus\{0\},\omega): \|f\|_{M\dot{K}^{\alpha, \lambda}_{p, q}(\omega)}<\infty\},
\end{equation}
where
\begin{equation}\label{sec2eq4}
\|f\|_{M\dot{K}^{\alpha, \lambda}_{p, q}(\omega)}=\sup\limits_{k_0 \in \mathbb Z}2^{-k_0\lambda}\left\{\sum\limits_{k=-\infty}^{k_0}2^{k\alpha p}||f\chi_k||_{q,\omega}^p\right\}^{1/p},
\end{equation}
with the usual modifications made when $p=\infty$ or $q=\infty.$
\end{definition}

If $\omega\equiv1$, then we denote $\dot{K}^{\alpha, p}_q(\omega)$ and $M\dot{K}^{\alpha, \lambda}_{p, q}(\omega)$ respectively by $\dot{K}^{\alpha, p}_q(\mathbb R^d)$ and $M\dot{K}^{\alpha, \lambda}_{p, q}(\mathbb R^d)$, which are standard Herz spaces. Obviously, $\dot{K}^{0,p}_p(\mathbb R^d)=L^p (\mathbb R^d)$ for $0<p\leq\infty$; $\dot{K}^{\frac{\alpha}{p},p}_p(\mathbb R^d)= L^p(|x|^{\alpha}dx) (\mathbb R^d)$ for all  $0<p\leq \infty$ and $\alpha\in \mathbb R.$ Meanwhile, $M\dot{K}^{\alpha,0}_{p,q}(\mathbb R^d)=\dot{K}^{\alpha,p}_q(\mathbb R^d)$, so the special cases of Morrey-Herz spaces are Herz spaces. 
\vskip12pt
We would like to recall the definition of the class of homogeneous weights introduced by \cite{CH}.
\begin{definition}\label{def3} Let $\gamma$ be a real number. Let $\mathcal W_{\gamma}$ be the set of all functions $\omega$ on $\mathbb R^d$, which are measurable, $\omega(x)>0$ for almost everywhere $x\in \mathbb R^d$, $0<\int\limits_{S_d}\omega(y)\sigma(y)<\infty$, and are absolutely homogeneous of degree $\gamma$, that is $\omega(tx)=|t|^{\gamma}\omega(x)$, for all $t\in \mathbb R\setminus \{0\}, x\in \mathbb R^d.$
\end{definition}
We remark that $\mathcal W=\bigcup\limits_{\gamma}\mathcal W_\gamma$ contains strictly the set of power weights $\omega(x)=|x|^\gamma$. For further discussions, we refer to \cite{CH}.
\vskip12pt
Throughout the whole paper, $S_d=\{x\in \mathbb R^d : |x|=1\}$ and we also denote $S_d=\frac{2\pi^{\frac d 2}}{\Gamma(\frac d 2)}$ . By $\omega$ we will denote a weight from $\mathcal W_\gamma$, where $\gamma>-d$. We also denote by $\psi$ a nonnegative and measurable function on $[0,1]^n$. 

\begin{definition}\label{sec4def1} Suppose that $0<\beta<1$, the Lipschitz space $Lip^{\beta}(\mathbb R^n)$ is defined as the set of all functions $f: \mathbb R^n \to \mathbb C$ such that 
\begin{equation}\label{sec2eq5}
||f||_{Lip^{\beta}(\mathbb R^n)}:=\sup\limits_{x,y\in \mathbb R^n, x \ne y}\frac{|f(x)-f(y)|}{|x-y|^{\beta}}<\infty.
\end{equation}
\end{definition}
\vskip12pt
If $(X,\|\cdot\|),(X_1,\|\cdot\|_1),\ldots,(X_m,\|\cdot\|_m)$ are normed spaces and $T$ is a sublinear operator $T:X_1\times\cdots\times X_m\to X$ then we set
\begin{equation}\label{sec2eq6}
\|T\|_{X_1\times\cdots\times X_m\to X}=\sup\limits_{\|x_1\|_1\leq1,\ldots,\|x_m\|_m\leq1}\|T(x_1,\ldots,x_m)\|.
\end{equation}
\section{Boundedness of $U^{m,n}_{\psi,\vec{s}}$ on the product of Herz and Morrey-Herz spaces}
In this section we will state and prove the results on the boundedness and bounds of the multilinear Hardy-Ces\`{a}ro operators on the product of Herz spaces and Morrey-Herz spaces. Before state our main results, let us introduction some notations. 
\vskip12pt
Throughout this section, $\beta>0,\gamma,\alpha, \alpha_1,\ldots, \alpha_m$ are real numbers, $\gamma_1,\ldots, \gamma_m>-d$, $0<p<\infty$, $1\leq q<\infty$, $1\leq p_i, q_i <\infty$ for $i=1,\ldots,m$ and $\lambda,\lambda_1,\ldots,\lambda_m\ge0$. All such constants satisfy the following relations
$$ \alpha_1+\alpha_2+\cdots+\alpha_m=\alpha, $$
$$ \frac 1{p_1}+\frac1 {p_2}+\cdots+\frac1{p_m}=\frac1 p, $$
$$ \frac 1 {q_1}+\frac1{q_2}+\cdots+\frac1{q_m}=\frac 1 q,$$
$$\frac{\gamma_1}{q_1}+\frac{\gamma_2}{q_2}+\cdots+\frac{\gamma_m}{q_m}=\frac\gamma q,$$
$$\lambda_1+\lambda_2+\cdots+\lambda_m=\lambda.$$
Functions $\omega_i$ belong to $\mathcal W_{\gamma_i}$ for all $i=1,\ldots,m$, and we set
\begin{equation}\label{sec3eq1}
\omega(x)= \prod_{i=1}^m\omega^{\frac q{q_i}}_i(x).
\end{equation}
Obviously that $\omega\in \mathcal W_{\gamma}$. Such weights $\omega$ as defined in (\ref{sec3eq1}) arise naturally in the theory  of multilinear operators.  
\vskip 12pt
The main results in this section are as follows


\begin{theorem}\label{sec3theo2}

{\rm(i)} Let $s_1(t),\ldots,s_m(t)\neq0$ almost everywhere in $[0,1]^n$ and 
\begin{equation}\label{sec3eq2}
{\mathcal A_1}=\int\limits_{[0,1]^n}\left(\prod_{i=1}^m|s_i(t)|^{-\alpha_i-\frac {d+\gamma_i}{q_i}+\lambda_i}\right)\psi(t)dt<\infty.
\end{equation}
Suppose that $1\leq p<\infty$ or $0<p<1$ and at least one of $\lambda_1,\ldots,\lambda_m$ is positive. Then 
\begin{equation}\label{sec3eq3}
\|U^{m,n}_{\psi,\vec{s}}(f_1,\ldots,f_m)\|_{M\dot{K}^{\alpha, \lambda}_{p,q}(\omega)}\leq C_{\vec{\alpha},\vec{\lambda}}\cdot {\mathcal A}_1\cdot\prod\limits_{i=1}^m\|f_i\|_{M\dot{K}^{\alpha_i, \lambda_i}_{p_i,q_i}(\omega_i)}.
\end{equation}
Here
\[
C_{\vec{\alpha},\vec{\lambda}}=\begin{cases}\prod\limits_{k=1}^m\left(2^{|\alpha_k-\lambda_k|}+1\right)\qquad\qquad\qquad\;\text{if}\;\;1\leq p<\infty&\\ \dfrac{2^\lambda}{\left(2^{\lambda p}-1\right)^{1/p}}\prod\limits_{k=1}^m\left(2^{|\alpha_k-\lambda_k|}+1\right)\quad\text{if}\;\;0<p<1\;\text{and}\;\lambda>0.\end{cases}
\]

{\rm(ii)} Conversely, let $0<p<\infty$, $0<\lambda_i<\infty$ for $i=1,\ldots,m$. Suppose that  $U^{m,n}_{\psi,\vec{s}}$ is defined as a bounded operator from $M\dot{K}^{\alpha_1, \lambda_1}_{p_1,q_1}(\omega_1)\times \cdots  \times M\dot{K}^{\alpha_m, \lambda_m}_{p_m,q_m}(\omega_m)$ to $M\dot{K}^{\alpha, \lambda}_{p,q}(\omega)$. Then we have that (\ref{sec3eq1}) holds and
\begin{equation}\label{sec3eq4}
\|U^{m,n}_{\psi,\vec{s}}\|_{M\dot{K}^{\alpha_1, \lambda_1}_{p_1,q_1}(\omega_1)\times\cdots\times M\dot{K}^{\alpha_m, \lambda_m}_{p_m,q_m}(\omega_m) \to M\dot{K}^{\alpha, \lambda}_{p,q}(\omega)}\geq {\mathcal A_1}\cdot D_{\vec{\alpha},\vec{\lambda}},
\end{equation}
where 
\[
D_{\vec{\alpha},\vec{\lambda}}=\frac{\prod\limits_{i=1}^m(2^{\lambda_i p_i}-1)^{1/p_i}}{(2^{\lambda p}-1)^{1/p}}\cdot\dfrac{\left(1-2^{-q(\lambda-\alpha)}\right)^{1/q}}{\prod\limits_{i=1}^m(1-2^{-q_i(\lambda_i-\alpha_i)})^{1/q_i}}\cdot\dfrac{\prod\limits_{i=1}^m(q_i(\lambda_i-\alpha_i))^{1/q_i}}{(q(\lambda-\alpha))^{1/q}}\cdot\dfrac{\left(\omega(S_d)\right)^{1/q}}{\prod\limits_{i=1}^m\left(\omega_i(S_d)\right)^{1/q_i}}.
\]

\end{theorem}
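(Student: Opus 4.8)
The common core of part (i) is the same for both ranges of $p$, and begins by peeling the $t$-integral off the norm. Since $q\ge1$, Minkowski's integral inequality in $L^q_\omega$ gives $\|U^{m,n}_{\psi,\vec s}(\vec f)\chi_k\|_{q,\omega}\le\int_{[0,1]^n}\big\|\prod_{i=1}^m f_i(s_i(t)\cdot)\chi_k\big\|_{q,\omega}\,\psi(t)\,dt$. I would then apply H\"older's inequality in the spatial variable: writing $\omega=\prod_i\omega_i^{q/q_i}$ and using $\tfrac1q=\sum_i\tfrac1{q_i}$ (exponents $q_i/q$) factors the integrand into $\prod_i\|f_i(s_i(t)\cdot)\chi_k\|_{q_i,\omega_i}$. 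The decisive change of variables $y=s_i(t)x$, combined with the homogeneity $\omega_i(x)=|s_i(t)|^{-\gamma_i}\omega_i(y)$ of $\omega_i\in\mathcal W_{\gamma_i}$, produces the Jacobian-plus-weight factor $|s_i(t)|^{-(d+\gamma_i)/q_i}$ and replaces $\chi_k$ by the characteristic function of the annulus $\{|s_i(t)|2^{k-1}<|y|\le|s_i(t)|2^k\}$. The geometric observation driving everything is that, if $\ell_i=\ell_i(t)\in\mathbb Z$ is chosen so that $2^{\ell_i-1}<|s_i(t)|\le2^{\ell_i}$, then this annulus is contained in $C_{k+\ell_i-1}\cup C_{k+\ell_i}$, so that the dilated piece is controlled by two genuine dyadic pieces of $f_i$.

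For $1\le p<\infty$ I would then apply Minkowski's integral inequality a second time, now to the weighted $\ell^p$ sum in $k$ integrated against $\psi(t)\,dt$, reducing matters to a pointwise-in-$t$ estimate. Inside, for fixed $t$, H\"older in $k$ with exponents $p_i/p\ge1$ (legitimate since $\tfrac1p=\sum_i\tfrac1{p_i}$ forces $p\le p_i$) together with $\sum_i\alpha_i=\alpha$ splits the sum into $m$ single-index $\ell^{p_i}$ sums. Shifting the index by $\ell_i$ (resp. $\ell_i-1$) and invoking the Morrey-Herz definition in the form $\|f_i\chi_j\|_{q_i,\omega_i}\le2^{j(\lambda_i-\alpha_i)}\|f_i\|_{M\dot K^{\alpha_i,\lambda_i}_{p_i,q_i}(\omega_i)}$ collapses the two-term contribution to a constant $2^{|\alpha_i-\lambda_i|}+1$ times $|s_i(t)|^{\lambda_i-\alpha_i}$; multiplying by the earlier $|s_i(t)|^{-(d+\gamma_i)/q_i}$ gives exactly the exponent $-\alpha_i-\tfrac{d+\gamma_i}{q_i}+\lambda_i$ appearing in $\mathcal A_1$. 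Integrating against $\psi$ reconstitutes $\mathcal A_1$, the powers $2^{k_0\lambda_i}$ multiply to $2^{k_0\lambda}$ via $\sum_i\lambda_i=\lambda$ and cancel the prefactor $2^{-k_0\lambda}$, and one reads off (\ref{sec3eq3}) with $C_{\vec\alpha,\vec\lambda}=\prod_i(2^{|\alpha_i-\lambda_i|}+1)$.

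The case $0<p<1$ is the main obstacle, since the $\ell^p$ Minkowski step is unavailable. Here I would instead derive the pointwise-in-$k$ bound $\|U^{m,n}_{\psi,\vec s}(\vec f)\chi_k\|_{q,\omega}\le C_1\,\mathcal A_1\,2^{k(\lambda-\alpha)}\prod_i\|f_i\|_{M\dot K^{\alpha_i,\lambda_i}_{p_i,q_i}(\omega_i)}$, with $C_1=\prod_i(2^{|\alpha_i-\lambda_i|}+1)$, using the same change of variables and the crude estimate $\|f_i\chi_j\|\le2^{j(\lambda_i-\alpha_i)}\|f_i\|$ inside the product rather than after H\"older. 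Substituting this into the Morrey-Herz norm leaves the geometric series $\sum_{k\le k_0}2^{kp\lambda}$, which converges precisely because $\lambda>0$ and evaluates to $2^{k_0 p\lambda}/(1-2^{-p\lambda})$; after multiplying by $2^{-k_0\lambda}$ this contributes the extra factor $2^\lambda/(2^{\lambda p}-1)^{1/p}$. This is exactly why the hypothesis ``at least one $\lambda_i>0$'' is required in this range and absent for $p\ge1$.

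For the converse (ii) I would test the operator against the radial powers $f_i(x)=|x|^{a_i}$ with $a_i=\lambda_i-\alpha_i-\tfrac{d+\gamma_i}{q_i}$. Homogeneity gives the clean identity $U^{m,n}_{\psi,\vec s}(\vec f)(x)=\mathcal A_1\,|x|^{a}$, where $a=\sum_i a_i=\lambda-\alpha-\tfrac{d+\gamma}{q}$ by the stated relations among the parameters, so the boundedness hypothesis forces the operator norm to dominate $\mathcal A_1\,\||x|^a\|_{M\dot K^{\alpha,\lambda}_{p,q}(\omega)}/\prod_i\||x|^{a_i}\|_{M\dot K^{\alpha_i,\lambda_i}_{p_i,q_i}(\omega_i)}$; in particular finiteness of the norm forces $\mathcal A_1<\infty$. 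It then remains to evaluate a power-function norm in closed form: polar coordinates and $\omega\in\mathcal W_\gamma$ give $\||x|^a\chi_k\|_{q,\omega}^q=\omega(S_d)\,\tfrac{1-2^{-q(\lambda-\alpha)}}{q(\lambda-\alpha)}\,2^{kq(\lambda-\alpha)}$, and summing the resulting geometric series (finite thanks to $\lambda_i>0$, which is precisely where that hypothesis enters) yields a closed expression. Forming the quotient and simplifying via $(1-2^{-p\lambda})^{1/p}=2^{-\lambda}(2^{p\lambda}-1)^{1/p}$ together with $\sum_i\lambda_i=\lambda$ reproduces $D_{\vec\alpha,\vec\lambda}$ and establishes (\ref{sec3eq4}). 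The delicate point here is bookkeeping: confirming that each $|x|^{a_i}$ genuinely lies in its Morrey-Herz space and that the many $p_i$- and $q_i$-indexed factors recombine to the stated $D_{\vec\alpha,\vec\lambda}$ exactly, rather than merely to a comparable constant, which is what makes the bound sharp.
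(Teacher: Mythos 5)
Your proposal is correct and follows essentially the same route as the paper: the same change of variables placing $s_i(t)C_k$ inside $C_{k+\ell_i-1}\cup C_{k+\ell_i}$, the same split between the double Minkowski--H\"older argument for $1\le p<\infty$ and the pointwise bound $\|f_i\chi_j\|_{q_i,\omega_i}\le 2^{j(\lambda_i-\alpha_i)}\|f_i\|_{M\dot K^{\alpha_i,\lambda_i}_{p_i,q_i}(\omega_i)}$ followed by the geometric series $\sum_{k\le k_0}2^{(k-k_0)\lambda p}$ for $0<p<1$, and the same test functions $f_i(x)=|x|^{\lambda_i-\alpha_i-(d+\gamma_i)/q_i}$ for the converse. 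No substantive differences to report.
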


\begin{theorem}\label{sec3theo1}
{\rm(i)} 
If $1\leq p<\infty$, $s_1(t),\ldots,s_m(t)\neq0$ almost everywhere in $[0,1]^n$ and
\begin{equation}\label{sec3eq5}
{\mathcal A}_2=\int\limits_{[0,1]^n}\left(\prod_{i=1}^m|s_i(t)|^{-\frac {d+\gamma_i}{q_i}-\alpha_i}\right)\psi(t)dt<\infty,
\end{equation}
then 
\begin{equation}\label{sec3eq6}
\|U^{m,n}_{\psi,\vec{s}}(f_1,\ldots,f_m)\|_{\dot{K}^{\alpha, p}_{q}(\omega)}\leq{\mathcal A}_2\cdot \prod\limits_{k=1}^m\left(2^{|\alpha_k|}+1\right)\cdot\prod\limits_{i=1}^m\|f_i\|_{\dot{K}^{\alpha_i, p_i}_{q_i}(\omega_i)}
\end{equation}

{\rm(ii)}  Let assume that $|s_i(t_1,\ldots,t_m)|\ge \min\{t_1^\beta,\ldots,t_m^\beta\}$ for $i=1,\ldots,m$ and $U^{m,n}_{\psi,\vec{s}}$ is bounded from $\dot{K}^{\alpha_1, p_1}_{q_1}(\omega_1)\times\cdots\times\dot{K}^{\alpha_m, p_m}_{q_m}(\omega_m)$ to $\dot{K}^{\alpha, p}_{q}(\omega)$. Then (\ref{sec3eq5}) holds and 
\begin{equation}\label{sec3eq7}
\|U^{m,n}_{\psi,\vec{s}}\|_{\dot{K}^{\alpha_1, p_1}_{q_1}(\omega_1)\times\cdots\times\dot{K}^{\alpha_m, p_m}_{q_m}(\omega_m)\to\dot{K}^{\alpha, p}_{q}(\omega)} \ge {\mathcal A}_2 \cdot E_{\vec\alpha}.
\end{equation}
Where 
\[
E_{\vec\alpha}=\dfrac{(mp)^{1/p}}{\prod\limits_{i=1}^mp_i^{1/p_i}}\cdot \left(\dfrac{2^{q\alpha}-1}{q\alpha}\right)^{1/q}\cdot\prod\limits_{i=1}^m\left(\dfrac{q_i\alpha_i}{2^{q_i\alpha_i}-1}\right)^{1/q_i}\cdot\dfrac{\left(\omega(S_d)\right)^{1/q}}{\prod\limits_{i=1}^m\left(\omega_i(S_d)\right)^{1/q_i}}.
\]

\end{theorem}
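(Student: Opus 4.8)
The plan is to prove (i) by an upper estimate that reduces everything to the behaviour of a single dilation on dyadic annuli, and (ii) by testing the operator on truncated power functions, as is standard for Herz-type spaces.

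For part (i), since $1\le p<\infty$ the quantity $\|\cdot\|_{\dot{K}^{\alpha,p}_q(\omega)}$ is a genuine norm, so Minkowski's integral inequality lets me pull the $t$-integration out:
\[
\|U^{m,n}_{\psi,\vec{s}}(f_1,\dots,f_m)\|_{\dot{K}^{\alpha,p}_q(\omega)}\le\int_{[0,1]^n}\Big\|\prod_{i=1}^m f_i(s_i(t)\,\cdot\,)\Big\|_{\dot{K}^{\alpha,p}_q(\omega)}\,\psi(t)\,dt.
\]
It then suffices to bound, pointwise in $t$, the Herz norm of the product of dilates. On each annulus $C_k$ I apply the multilinear H\"older inequality with exponents $q_i$ (legitimate because $\sum 1/q_i=1/q$ and $\omega=\prod_i\omega_i^{q/q_i}$) to get $\|\prod_i f_i(s_i\cdot)\chi_k\|_{q,\omega}\le\prod_i\|f_i(s_i\cdot)\chi_k\|_{q_i,\omega_i}$. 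For a single dilate, the change of variables $y=s_i(t)x$ together with the homogeneity $\omega_i(y/s_i)=|s_i|^{-\gamma_i}\omega_i(y)$ turns $C_k$ into $\{|s_i|2^{k-1}<|y|\le|s_i|2^k\}$, which meets at most the two consecutive shells $C_{m_i}\cup C_{m_i+1}$ with $m_i=k+\lfloor\log_2|s_i|\rfloor$; this yields the factor $|s_i|^{-(d+\gamma_i)/q_i}$. Raising to the $p$-th power, writing $2^{k\alpha p}=\prod_i 2^{k\alpha_i p}$, and applying H\"older in the summation index $k$ with exponents $p_i/p$ (so $\sum_i p/p_i=1$) separates the $m$ factors. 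After the shift $k\mapsto m_i$, each factor becomes $\|f_i\|_{\dot{K}^{\alpha_i,p_i}_{q_i}(\omega_i)}$ times $2^{-\lfloor\log_2|s_i|\rfloor\alpha_i}(1+2^{-\alpha_i})$, and the elementary bound $2^{-\lfloor\log_2|s_i|\rfloor\alpha_i}(1+2^{-\alpha_i})\le(2^{|\alpha_i|}+1)|s_i|^{-\alpha_i}$ (separating the cases $\alpha_i\ge0$ and $\alpha_i<0$ via the fractional part of $\log_2|s_i|$) produces the constant $2^{|\alpha_i|}+1$ together with $|s_i|^{-\alpha_i}$. Collecting the powers of $|s_i|$ gives $\prod_i|s_i|^{-\alpha_i-(d+\gamma_i)/q_i}$, and integrating in $t$ reproduces $\mathcal A_2$ and the constant $\prod_k(2^{|\alpha_k|}+1)$ of (\ref{sec3eq6}).

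For the converse, part (ii), I would test on the truncated, perturbed power functions $f_i(x)=|x|^{-\alpha_i-(d+\gamma_i)/q_i-\epsilon_i}\chi_{\{|x|\ge1\}}(x)$ with small $\epsilon_i>0$. A pure power has infinite Herz norm (its per-shell norm is constant in $k$), so the perturbation is exactly what makes $\|f_i\|_{\dot{K}^{\alpha_i,p_i}_{q_i}(\omega_i)}$ a convergent geometric series; a polar-coordinate computation evaluates it in closed form, the angular integral contributing $\omega_i(S_d)$, the radial integral the factor $(2^{q_i\alpha_i}-1)/(q_i\alpha_i)$ as $\epsilon_i\to0$, and the sum over $k\ge1$ the factor $(2^{p_i\epsilon_i}-1)^{-1}$.

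Composing with the dilations gives $U^{m,n}_{\psi,\vec{s}}(\vec f)(x)=|x|^{-\alpha-(d+\gamma)/q-\sum_i\epsilon_i}\int_{\{t:\,\min_i|s_i(t)|\ge1/|x|\}}\prod_i|s_i(t)|^{-\alpha_i-(d+\gamma_i)/q_i-\epsilon_i}\psi(t)\,dt$, and this is precisely where the hypothesis $|s_i(t)|\ge\min\{t_1^\beta,\dots,t_n^\beta\}$ enters: it bounds the cut-off region from below by $[|x|^{-1/\beta},1]^n$, so on each shell the inner integral is at least a quantity increasing to the $\epsilon$-perturbed version of $\mathcal A_2$. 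Forming the quotient $\|U(\vec f)\|_{\dot{K}^{\alpha,p}_q(\omega)}/\prod_i\|f_i\|_{\dot{K}^{\alpha_i,p_i}_{q_i}(\omega_i)}$, a lower bound for the operator norm, the angular factors combine into $\omega(S_d)^{1/q}/\prod_i\omega_i(S_d)^{1/q_i}$ and the radial factors into the displayed ratios; letting $\epsilon_i\to0$, the perturbed integral converges to $\mathcal A_2$ by monotone/dominated convergence (again using the growth condition on $s_i$), which simultaneously forces $\mathcal A_2<\infty$, i.e. (\ref{sec3eq5}), and leaves the constant $E_{\vec\alpha}$.

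I expect the main obstacle to be exactly this limit. Because of the truncation region $\{\min_i|s_i(t)|\ge1/|x|\}$ the function $U(\vec f)$ is not an exact power of $|x|$, so one must track the geometric-series constants carefully as $\epsilon_i\to0$ (these are the source of the combinatorial factor $(mp)^{1/p}/\prod_i p_i^{1/p_i}$) and justify interchanging the limit with the summation over $k$ and the integration in $t$. The hypothesis $|s_i(t)|\ge\min\{t_1^\beta,\dots,t_n^\beta\}$ is tailored to make each of these interchanges legitimate, and it is also what guarantees finiteness at the critical exponent in the limit.
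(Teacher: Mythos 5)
Your proposal is correct and follows essentially the same route as the paper: part (i) is the Minkowski--H\"older--dilation argument that the paper carries out for the Morrey--Herz case (Theorem \ref{sec3theo2}(i)) and then invokes here with $\lambda_i=0$, and part (ii) uses exactly the paper's test functions $f_i(x)=|x|^{-\alpha_i-(d+\gamma_i)/q_i-\epsilon}\chi_{\{|x|>1\}}$, the same truncation set $E_x$ controlled by the growth hypothesis on $s_i$, and the same limit $\epsilon\to0$. The only cosmetic differences are that you prove (i) directly rather than quoting the $\lambda=0$ specialization, and you allow separate parameters $\epsilon_i$ where the paper takes a single $\epsilon$.
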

When $\alpha_1=\cdots=\alpha_m=0$ we obtain the boundedness and bounds for multilinear Hardy-Ces\`{a}ro operator on the product Lebesgue spaces. However, the results are worse than those obtained in \cite{HK}. In fact, Hung and Ky \cite[Theorem 3.1]{HK} proved that the norm of $U^{m,n}_{\psi,\vec{s}}$ from $L^{p_1}_{\omega_1}\times\cdots\times L^{p_m}_{\omega_m}$ to $L^p_{\omega}$ is exactly $\int\limits_{[0,1]^n}\left(\prod\limits_{i=1}^m|s_i(t)|^{-\frac {d+\gamma_i}{q_i}-\alpha_i}\right)\psi(t)dt.$ 
\begin{proof}[Proof of Theorem \ref{sec3theo2}]
Suppose that $s_1(t),\ldots,s_m(t)\neq0$ a.e $t\in[0,1]^n$ such that (\ref{sec3eq1}) holds. Fix an $k\in\mathbb Z$ and consider functions $f_i\in M\dot{K}^{\alpha_i, \lambda_i}_{p_i,q_i}(\omega_i)$ for $i=1,\ldots,m$. By H\"{o}lder, Minkowski inequalities
\begin{align*}
&\|U^{m,n}_{\psi,\vec{s}}(f_1, \ldots,f_m)\chi_k\|_{q,\omega}\\
\leq\;& \int_{[0,1]^n} \prod_{i=1}^m \left(\int_{C_k}\left|f_i(s_i(t)x)\right|^{q_i}\omega_i(x)dx\right)^{1/{q_i}}\psi(t)dt \\
\leq\;&\int_{[0,1]^n} \prod_{i=1}^m\left(\int_{s_i(t)C_k}\left|f_i(x)\right|^{q_i}|s_i(t)|^{-d-\gamma_i}\omega_i(x)dx\right)^{1/{q_i}}\psi(t)dt\\
\leq\;&\int\limits_{[0,1]^n}\left(\prod_{i=1}^m|s_i(t)|^{-\frac {d+\gamma_i}{q_i}}\right)\cdot\prod_{i=1}^m\left(\int\limits_{s_i(t)C_k}|f_i(x)|^{q_i}\omega_i(x)dx\right)^{1/q_i}\cdot\psi(t)dt.
\end{align*}
For each $t$ such that $\left|s_1(t)\cdots  s_m(t)\right|>0$, there exists integer numbers $\ell_1,\ldots,\ell_m$ such that $2^{\ell_i-1}<|s_i(t)|\leq2^{\ell_i}$ for $i=1,\ldots,m$. After setting $\psi_{\vec s}(t)=\left(\prod\limits_{i=1}^m|s_i(t)|^{-\frac {d+\gamma_i}{q_i}}\right)\psi(t)$, we have that
\begin{align*}
&\|U^{m,n}_{\psi,\vec{s}}(f_1,\ldots, f_m)\chi_k\|_{q,\omega}\\
\leq\; &\int\limits_{[0,1]^n}\prod_{i=1}^m\left(\int\limits_{C_{k+\ell_i-1}\cup C_{k+\ell_i}}|f_i(x)|^{q_i}\omega_i(x)dx\right)^{1/q_i}\psi_{\vec s}(t)dt\\
\leq\;&\int\limits_{[0,1]^n}\prod_{i=1}^m\left(\|f_i\chi_{k+\ell_i-1}\|_{q_i,\omega_i}+\|f_i\chi_{k+\ell_i}\|_{q_i,\omega_i}\right)\psi_{\vec s}(t)dt.
\end{align*}
Thus,
\begin{equation}\label{sec3eq8}
\|U^{m,n}_{\psi,\vec{s}}(f_1,\ldots, f_m)\chi_k\|_{q,\omega}\leq\int\limits_{[0,1]^n}\prod_{i=1}^m\left(\|f_i\chi_{k+\ell_i-1}\|_{q_i,\omega_i}+\|f_i\chi_{k+\ell_i}\|_{q_i,\omega_i}\right)\psi_{\vec s}(t)dt.
\end{equation}
Now we consider the following cases:
\vskip12pt
{\bf Case 1:} Suppose that $1\leq p<\infty$. To estimate the norm of $U^{m,n}_{\psi,\vec{s}}(f_1, \ldots,f_m)$ in $M\dot{K}^{\alpha, \lambda}_{p, q}(\omega)$ space, we shall require the Minkowski integral inequality in the following form.
\begin{lemma}\label{sec3lem1} 
Let $p\ge1$ and $(f_k)_{k\ge1}$ be nonnegative and measurable functions on $[0,1]^n$. Then
\[
\sum\limits_{k=1}^\infty\left(\int_{[0,1]^n}f_k(t)dt\right)^p\leq \left(\int\limits_{[0,1]^n}\left(\sum\limits_{k=1}^\infty f_k^p(t)\right)^{1/p}dt\right)^p
\] 
\end{lemma}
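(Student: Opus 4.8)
The plan is to recognise this inequality as the discrete-sum form of Minkowski's integral inequality for the $\ell^p$ norm, and to prove it by first establishing the finite-sum version and then passing to the limit by monotone convergence. Throughout, all functions are nonnegative, so every interchange of summation and integration is justified by Tonelli's theorem and no integrability hypothesis beyond measurability is needed; infinite values are permitted and absorbed into trivial cases.

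First I would fix $N\in\mathbb{N}$ and set $a_k=\int_{[0,1]^n}f_k(t)\,dt$ for $1\le k\le N$. If some $a_k=+\infty$, then the right-hand side already dominates $\int_{[0,1]^n}f_k(t)\,dt=\infty$ and the claim is trivial, so I may assume each $a_k<\infty$; then $S_N:=\sum_{k=1}^N a_k^p<\infty$. The key computation writes $S_N=\sum_{k=1}^N a_k^{p-1}\int_{[0,1]^n}f_k(t)\,dt$ and uses Tonelli to pull the finite sum inside the integral. I then apply the discrete H\"older inequality in the index $k$, with exponents $p'=p/(p-1)$ and $p$, together with the identity $(p-1)p'=p$, which turns the factor $\bigl(\sum_{k=1}^N a_k^{(p-1)p'}\bigr)^{1/p'}$ into $S_N^{1/p'}$.

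This yields $S_N\le S_N^{1/p'}\int_{[0,1]^n}\bigl(\sum_{k=1}^N f_k(t)^p\bigr)^{1/p}\,dt$. If $S_N=0$ the finite-sum inequality is immediate; otherwise I divide by the finite, nonzero quantity $S_N^{1/p'}$ and use $1-1/p'=1/p$ to obtain $S_N^{1/p}\le\int_{[0,1]^n}\bigl(\sum_{k=1}^N f_k(t)^p\bigr)^{1/p}\,dt$, i.e. the desired inequality for the partial sum of length $N$.

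Finally, I let $N\to\infty$. The left-hand side $S_N$ increases to $\sum_{k=1}^\infty a_k^p$, while the integrands $\bigl(\sum_{k=1}^N f_k(t)^p\bigr)^{1/p}$ increase pointwise to $\bigl(\sum_{k=1}^\infty f_k(t)^p\bigr)^{1/p}$, so monotone convergence on both sides delivers the full inequality. I expect the only genuinely delicate point to be the bookkeeping of the degenerate cases, namely where some $a_k$ or the right-hand integral is infinite, or where $S_N=0$, since the division step requires $S_N$ finite and strictly positive; once these are dispatched, the core reduces to a single application of H\"older. Note also that the case $p=1$ is in fact an equality by Tonelli and requires no separate treatment.
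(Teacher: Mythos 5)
Your proof is correct, and it is essentially the argument the paper has in mind: the authors omit the proof, remarking only that it follows from the characterization of the $\ell^p$-norm, and your computation --- pairing $(a_k)$ with the extremal dual sequence $a_k^{p-1}/S_N^{1/p'}$ and applying H\"older in $k$ --- is exactly that duality argument made explicit, with the degenerate cases and the passage $N\to\infty$ by monotone convergence handled properly. No gaps.
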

Since the proof of Lemma \ref{sec3lem1} is straightforward by using the characterization of $\ell^p$-norm, it is left to the reader. Consequently, by the definition of $U^{m,n}_{\psi,\vec s}$, H\"{o}lder's inequality, Lemma \ref{sec3lem1} and (\ref{sec3eq3}) one has
\begin{align*}
&\|U^{m,n}_{\psi,\vec{s}}(f_1, \ldots,f_m)\|_{M\dot{K}^{\alpha, \lambda}_{p, q}(\omega)}\\
=\;&\sup\limits_{k_0\in \mathbb Z}2^{-k_0\lambda}\left(\sum_{k=-\infty}^{k_0}2^{k\alpha p}\|U^{m,n}_{\psi,\vec{s}}(f_1, \ldots,f_m)\chi_k\|_{q,\omega}^p\right)^{1/p}\\
\leq\;& \sup\limits_{k_0\in \mathbb Z}2^{-k_0\lambda}\left(\sum_{k=-\infty}^{k_0}2^{k\alpha p}\left(\int\limits_{[0,1]^n}\prod_{i=1}^m\left(\|f_i\chi_{k+\ell_i-1}\|_{q_i,\omega_i}+\|f_i\chi_{k+\ell_i}\|_{q_i,\omega_i}\right)\psi_{\vec s}(t)dt\right)^p\right)^{1/p}\\
\leq\;&\sup\limits_{k_0\in \mathbb Z}2^{-k_0\lambda}\int\limits_{[0,1]^n}\left(\sum\limits_{k=-\infty}^{k_0}2^{k\alpha p}\prod_{i=1}^m\left(\|f_i\chi_{k+\ell_i-1}\|_{q_i,\omega_i}+\|f_i\chi_{k+\ell_i}\|_{q_i,\omega_i}\right)^p \right)^{1/p}\psi_{\vec s}(t)dt\\
\leq\;&\sup\limits_{k_0\in \mathbb Z}2^{-k_0\lambda}\int_{[0,1]^n}\prod_{i=1}^m\left(\sum\limits_{k=-\infty}^{k_0}2^{k\alpha_i p_i}\left(\|f_i\chi_{k+\ell_i-1}\|_{q_i,\omega_i}+\|f_i\chi_{k+\ell_i}\|_{q_i,\omega_i}\right)^{p_i} \right)^{1/p_i}\psi_{\vec s}(t)dt\\
\leq\;&\sup\limits_{k_0\in \mathbb Z}\int\limits_{[0,1]^n}\prod_{i=1}^m2^{-k_0\lambda_i}\left\{\left(\sum_{k=-\infty}^{k_0}2^{k\alpha_i p_i}||f_i\chi_{k+\ell_i-1}||_{q_i,\omega_i}^{p_i}\right)^{1/p_i}+\left(\sum_{k=-\infty}^{k_0}2^{k\alpha_i p_i}||f_i\chi_{k+\ell_i}||_{q_i,\omega_i}^{p_i}\right)^{1/p_i} \right\}\psi_{\vec s}(t)dt\\
\leq\;&\prod_{i=1}^m\|f_i\|_{M\dot{K}^{\alpha_i, \lambda_i}_{p_i, q_i}(\omega_i)}\int\limits_{[0,1]^n}\prod_{i=1}^m\left(2^{-(\ell_i-1)(\alpha_i-\lambda_i)}+2^{-\ell_i(\alpha_i-\lambda_i)}\right)\psi_{\vec s}(t)dt\\
\leq\;&\prod\limits_{i=1}^m\left(2^{|\alpha_i-\lambda_i|}+1\right)\prod\limits_{i=1}^m\|f_i\|_{M\dot{K}^{\alpha_i, \lambda_i}_{p_i, q_i}(\omega_i)}\int\limits_{[0,1]^n}\left(\prod_{i=1}^m|s_i(t)|^{-\alpha_i-\frac {d+\gamma_i}{q_i}+\lambda_i}\right)\psi(t)dt.
\end{align*}
Therefore,
\[
\|U^{m,n}_{\psi,\vec{s}}(f_1, \ldots,f_m)\|_{M\dot{K}^{\alpha, \lambda}_{p, q}(\omega)}\leq C_{\vec{\alpha},\vec{\lambda}}\cdot{\mathcal A}_1\cdot\prod\limits_{i=1}^m\|f_i\|_{M\dot{K}^{\alpha_i, \lambda_i}_{p_i, q_i}(\omega_i)}.
\]
It means that $U^{m,n}_{\psi,\vec{s}}$ is bounded from $M\dot{K}^{\alpha_1, \lambda_1}_{p_1, q_1}(\omega_1)\times\cdots\times M\dot{K}^{\alpha_m, \lambda_m}_{p_m, q_m}(\omega_m)$ to $ M\dot{K}^{\alpha, \lambda}_{p, q}(\omega)$ and its norm is not greater than $ C_{\vec{\alpha},\vec{\lambda}}\cdot{\mathcal A}_1$.
\vskip12pt
{\bf Case 2:} Suppose that $0<p<1$ and $\lambda_1,\ldots,\lambda_m$ are not equal to zero. We shall need the following lemma.
\begin{lemma}\label{sec3lem2}
If $f\in M\dot{K}^{\alpha,\lambda}_{p,q}(\omega)$ then $\|f\chi_k\|_{q,\omega}\leq 2^{k(\lambda-\alpha)}\|f\|_{M\dot{K}^{\alpha,\lambda}_{p,q}(\omega)}$. 
\end{lemma}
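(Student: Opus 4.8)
The plan is to extract the claim directly from the definition of the Morrey--Herz norm by choosing a single favourable value of the free index $k_0$ and then discarding the inessential terms of the defining series.

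Concretely, I would specialise the supremum over $k_0\in\mathbb Z$ in Definition \ref{sec2def2} to the one value $k_0=k$. Since that value is admissible, this alone already gives
\[
2^{-k\lambda}\left(\sum_{j=-\infty}^{k}2^{j\alpha p}\|f\chi_j\|_{q,\omega}^p\right)^{1/p}\leq\|f\|_{M\dot{K}^{\alpha,\lambda}_{p,q}(\omega)}.
\]
Next I would invoke the fact that every summand is nonnegative, so the whole series is bounded below by its single $j=k$ term, namely $2^{k\alpha p}\|f\chi_k\|_{q,\omega}^p$. Substituting this lower bound and simplifying the exponents via $2^{-k\lambda}\bigl(2^{k\alpha p}\|f\chi_k\|_{q,\omega}^p\bigr)^{1/p}=2^{k(\alpha-\lambda)}\|f\chi_k\|_{q,\omega}$, the left-hand side collapses to $2^{k(\alpha-\lambda)}\|f\chi_k\|_{q,\omega}$; multiplying through by $2^{k(\lambda-\alpha)}$ then delivers the asserted inequality.

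I expect no genuine obstacle here: the lemma is purely a monotonicity property of the norm, resting on just two observations, the admissible choice $k_0=k$ and the positivity of the series terms. The only detail worth noting is the limiting cases $p=\infty$ and/or $q=\infty$, where sums are replaced by suprema; there the same reasoning applies verbatim, since a supremum of nonnegative numbers likewise dominates any one of them. This elementary estimate is exactly the tool that will later let me control the norm in the regime $0<p<1$ in Case 2 of Theorem \ref{sec3theo2}, where the Minkowski-type inequality of Lemma \ref{sec3lem1} is no longer available.
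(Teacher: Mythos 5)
Your argument is correct and is precisely the ``direct from Definition \ref{sec2def2}'' proof that the paper omits: take $k_0=k$ in the supremum, bound the series below by its $j=k$ term, and rearrange the powers of $2$. Nothing further is needed.
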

The proof of Lemma \ref{sec3lem2} follows directly from Definition \ref{sec2def2}, thus we omit it here. By (\ref{sec3eq9}) and Lemma \ref{sec3lem2}, we have 
\begin{align*}
&\|U^{m,n}_{\psi,\vec{s}}(f_1, \ldots,f_m)\|_{M\dot{K}^{\alpha, \lambda}_{p, q}(\omega)}\\
\leq\;&\sup\limits_{k_0\in \mathbb Z}2^{-k_0\lambda} \left(\sum_{k=-\infty}^{k_0}2^{kp\alpha}\left(\int_{[0,1]^n}\prod\limits_{i=1}^m\left(\|f_i\chi_{k+\ell_i-1}\|_{q_i,\omega_i}+\|f_i\chi_{k+\ell_i}\|_{q_i,\omega_i}\right)\psi_{\vec s}(t)dt\right)^p\right)^{1/p}\\
\leq\;&\sup\limits_{k_0\in \mathbb Z}2^{-k_0\lambda}\left(\sum_{k=-\infty}^{k_0}2^{kp\alpha}\left(\int_{[0,1]^n}\prod\limits_{i=1}^m\left(2^{(k+\ell_i-1)(\lambda_i-\alpha_i)}+2^{(k+\ell_i)(\lambda_i-\alpha_i)}\right)\|f_i\|_{M\dot{K}^{\alpha_i,\lambda_i}_{p_i,q_i}(\omega_i)}\psi_{\vec s}(t)dt\right)^p\right)^{1/p}\\
\leq\;&\prod\limits_{i=1}^m\left(2^{|\alpha_i-\lambda_i|}+1\right)\left(\prod\limits_{i=1}^m\|f_i\|_{M\dot{K}^{\alpha_i, \lambda_i}_{p_i, q_i}(\omega_i)}\right)\left(\int\limits_{[0,1]^n}\prod\limits_{i=1}^m|s_i(t)|^{\lambda_i-\alpha_i}\psi_{\vec s}(t)dt\right)\sup_{k_0\in\mathbb Z}\left(\sum\limits_{k=-\infty}^{k_0}2^{(k-k_0)\lambda p}\right)^{1/p}.
\end{align*}
We here remind that $\lambda=\lambda_1+\cdots+\lambda_m$ and $\alpha=\alpha_1+\cdots+\alpha_m$.  Since $\lambda>0$, the series $\sum\limits_{k=-\infty}^{k_0}2^{(k-k_0)\lambda p}$ is convergent and its sum is $\dfrac{2^{\lambda p}}{2^{\lambda p}-1}$. Therefore
\[
\|U^{m,n}_{\psi,\vec{s}}(f_1, \ldots,f_m)\|_{M\dot{K}^{\alpha, \lambda}_{p, q}(\omega)}\leq C_{\vec{\alpha},\vec{\lambda}}\cdot{\mathcal A}_1\cdot\left(\prod\limits_{i=1}^m\|f_i\|_{M\dot{K}^{\alpha_i, \lambda_i}_{p_i, q_i}(\omega_i)}\right),
\]
as desired.
\vskip12pt
Now, we assume that $U^{m,n}_{\psi,\vec{s}}$ is defined as a bounded operator from $M\dot{K}^{\alpha_1, \lambda_1}_{p_1,q_1}(\omega_1)\times \cdots  \times M\dot{K}^{\alpha_m, \lambda_m}_{p_m,q_m}(\omega_m)$ to $M\dot{K}^{\alpha, \lambda}_{p,q}(\omega)$. Hence,
\begin{equation}\label{sec3eq9}
\|U^{m,n}_{\psi,\vec{s}}(f_1,\ldots,f_m)\|_{ M\dot{K}^{\alpha, \lambda}_{p,q}(\omega)}\leq \|U^{m,n}_{\psi,\vec{s}}\|_{M\dot{K}^{\alpha_1, \lambda_1}_{p_1,q_1}(\omega_1)\times\cdots\times M\dot{K}^{\alpha_m, \lambda_m}_{p_m,q_m}(\omega_m) \to M\dot{K}^{\alpha, \lambda}_{p,q}(\omega)} \prod\limits_{i=1}^m\|f_i\|_{M\dot{K}^{\alpha_i, \lambda_i}_{p_i, q_i}(\omega_i)}.
\end{equation}
 
For each $i=1,\ldots,m$ we set
\begin{equation}\label{sec3eq10} 
f_i(x)=|x|^{-\alpha_i-\frac{d+\gamma_i}{q_i}+\lambda_i}.
\end{equation} 
Then, it is not hard to see that
\begin{equation}\label{sec3eq12}
\|f_i\|_{M\dot{K}^{\alpha_i, \lambda_i}_{p_i, q_i}(\omega_i)}=\dfrac{2^{\lambda_i}}{(2^{\lambda_i p_i}-1)^{1/p_i}}\cdot\left(\dfrac{1-2^{-q_i(\lambda_i-\alpha_i)}}{q_i(\lambda_i-\alpha_i)}\right)^{1/q_i}\cdot\left(\omega_i(S_d)\right)^{1/q_i},
\end{equation}
and
\[
U^{m,n}_{\psi,\vec s}(f_1,\ldots,f_m)(x)=|x|^{-\alpha+\lambda-\frac{d+\gamma}q}\int_{[0,1]^n}\prod\limits_{i=1}^m|s_i(t)|^{-\alpha_i+\lambda_i-\frac{d+\gamma_i}{q_i}}\psi(t)dt.
\]
This implies  that
\begin{equation}\label{sec3eq13}
\|U^{m,n}_{\psi,\vec s}(f_1,\ldots,f_m)\|_{M\dot{K}^{\alpha,\lambda}_{p,q}(\omega)}={\mathcal A}\cdot \frac{2^\lambda}{(2^{\lambda p}-1)^{1/p}}\cdot\left(\dfrac{1-2^{-q(\lambda-\alpha)}}{q(\lambda-\alpha)}\right)^{1/q}\cdot\left(\omega(S_d)\right)^{1/q}.
\end{equation}
Therefore, by (\ref{sec3eq9}), (\ref{sec3eq12}) and (\ref{sec3eq13}), we get (\ref{sec3eq7}). This completes the proof of Theorem \ref{sec3theo2}.
\end{proof}

\begin{proof}[Proof of Theorem \ref{sec3theo1}] Since Morrey-Herz spaces $M\dot{K}^{\alpha,\lambda}_{p,q}(\omega)$ reduces to Herz spaces $\dot{K}^{\alpha,p}_q(\omega)$, thus part (i) of Theorem \ref{sec3theo1} is a special case of Theorem \ref{sec3theo2}, part (i). To prove the second part, we suppose that $U^{m,n}_{\psi,\vec{s}}$ is bounded from $\dot{K}^{\alpha_1, p_1}_{q_1}(\omega_1)\times\cdots\times\dot{K}^{\alpha_m, p_m}_{q_m}(\omega_m)$ to $\dot{K}^{\alpha, p}_{q}(\omega)$. Fix $0<\epsilon<1$, for $i=1,\ldots,m$ we set
\[
f_i(x)=\begin{cases}\;0\qquad\qquad\qquad\,\;\;\text{if}\quad |x|\leq1&\\ |x|^{-\alpha_i-\frac{d+\gamma_i}{q_i}-\epsilon}\qquad\text{if}\quad|x|>1.\end{cases}
\]
Then, a simple computation gives $f_i\in\dot{K}^{\alpha_i,p_i}_{q_i}(\omega_i)$ and 
\[
\|f_i\|_{\dot{K}^{\alpha_i,p_i}_{q_i}(\omega_i)}=\dfrac1{(2^{\epsilon p_i}-1)^{1/p_i}}\cdot\left(\dfrac{2^{q_i(\alpha_i+\epsilon)}-1}{q_i(\alpha_i+\epsilon)}\right)^{1/q_i}\cdot \left(\omega_i(S_d)\right)^{1/q_i}.
\]
We see that 
\[
U^{m,n}_{\psi,\vec s}(f_1,\ldots,f_m)(x)=|x|^{-\alpha-\frac{d+\gamma}q-m\epsilon}\int\limits_{E_x}\prod\limits_{i=1}^m|s_i(t)|^{-\alpha_i-\frac{d+\gamma_i}{q_i}-\epsilon}\psi(t)dt,
\]
here $E_x$ is the set of $t\in[0,1]^n$ such that $|s_i(t)x|>1$. By the assumption of the hypothesis, $|s_i(t_1,\ldots,t_n)|\geq\min\{t_1,\ldots,t_n\}^{\beta}$, it gives $[1/|x|^{1/\beta},1]\subset E_x$ for any $|x|>1$. Let $k_0$ be the smallest integer number such that $2^{(1-k)/\beta}<\epsilon$ for any $k\ge k_0$. Then
\begin{align*}
&\|U^{m,n}_{\psi,\vec s}(f_1,\ldots,f_m)\|^p_{\dot{K}^{\alpha, p}_{q}(\omega)}\\
\ge\;&\sum\limits_{k=k_0}^\infty 2^{k\alpha p}\left(\int_{2^{k-1}<|x|\leq 2^k}|x|^{-(\alpha+m\epsilon)q-(d+\gamma)}\omega(x)\left(\int_{[2^{(1-k)/\beta},1]^n}\prod\limits_{i=1}^m|s_i(t)|^{-\alpha_i-\frac{d+\gamma_i}{q_i}-\epsilon}\psi(t)dt\right)^qdx\right)^{p/q} \\
\ge\;&\left(\sum\limits_{k=k_0}^{\infty}2^{-kpm\epsilon}\right)\cdot\left(\dfrac{2^{q(\alpha+m\epsilon)}-1}{q(\alpha+m\epsilon)}\right)^{p/q}\cdot(\omega(S_d))^{p/q}\cdot\left(\int_{[\epsilon,1]^n}\prod\limits_{i=1}^m|s_i(t)|^{-\alpha_i-\frac{d+\gamma_i}{q_i}-\epsilon}\psi(t)dt\right)^p.
\end{align*}
On the other hand
\[
\prod\limits_{i=1}^m\|f_i\|_{\dot{K}^{\alpha_i,p_i}_{q_i}(\omega_i)}=\prod\limits_{i=1}^m\left(\dfrac1{(2^{\epsilon p_i}-1)^{1/p_i}}\cdot\left(\dfrac{2^{q_i(\alpha_i+\epsilon)}-1}{q_i(\alpha_i+\epsilon)}\right)^{1/q_i}\cdot \left(\omega_i(S_d)\right)^{1/q_i}\right).
\]
Thus, letting $\epsilon\to0$, by the Lebesgue's dominated convergence Theorem and the obove estimate give (\ref{sec3eq5}). This completes the proof of Theorem \ref{sec3theo1}.
\end{proof}

\section{Commutator of weighted multilinear Hardy-Ces\`{a}ro operator}
In 1976, a famous result of Coifman, Rochberg and Weiss \cite{CRW} stated that if $b\in BMO(\mathbb R^d)$ then the commutator $[b,T]$ is bounded on $L^p(\mathbb R^d)$ for every $p\in(1,\infty)$ and $T$ is a classical Calder\'{o}n-Zygmund operator. Moreover $BMO(\mathbb R^d)$ is the largest space having this property.  Similarly to the classical result of Coifman-Rochberg-Weiss, Z.W. Fu, Z.G. Liu and S.Z. Lu \cite{FLL} proved that if $b\in BMO(\mathbb R^d)$ then $[b,U_\psi]$ is bounded on $L^p(\mathbb R^d)$ if and only if 
\begin{equation}\label{sec4eq1a}
\int\limits_0^1t^{-d/p}\psi(t)\log\frac2tdt<\infty.
\end{equation}
From conditions (\ref{sec1eq4}) and (\ref{sec4eq1a}), we observe that the commutator $[b,U_\psi]$ is more singular than $U_\psi$. Such results were extended by Z. Fu, S. Lu \cite{FL1} to Morrey spaces, by N.M. Chuong, H.D. Hung \cite{CH} in case $T$ is Hardy-Ces\`{a}ro operators. In \cite{HK}, H.D. Hung and L.D. Ky obtained sufficient and necessary conditions on weight functions so that the commutator of these weighted multilinear Hardy-Ces\`{a}ro operator (with symbols in central BMO space) are bounded on the product of central Morrey spaces. 
\vskip12pt
Notice that there are several results on commutator for other type of Hardy operators or even for some class of Hausdorff operators (for example see \cite{CH,FL2,FL1,FL,GY,HK} and references therein). But there is only partial such results for the commutator operator $[b,U_\psi]$, with symbols are Lipschitz, in Morrey-Herz space by Tang-Xue-Zhou \cite{TXZ}. In this section, we will improve this result and extend it to the operator $U^{m,n}_{\psi,\overrightarrow{s}}$. At first, let us remind the definition of the multilinear commutator generated by $U^{m,n}_{\psi,\overrightarrow{s}}$.
\begin{definition}\label{sec4def3}Let $m,n\in\mathbb N$, $\psi:[0,1]^n\to[0,\infty)$, $s_1,\ldots,s_m:[0,1]^n\to\mathbb R$, $b_1,\ldots,b_m$ be locally integrable functions on $\mathbb R^d$ and $f_1,\ldots,f_m:\mathbb R^d\to\mathbb C$ be measurable functions. The commutator of weighted multilinear Hardy-Ces\`{a}ro operator $U^{m,n}_{\psi,\overrightarrow{s}}$ is defined as
\begin{equation}\label{sec4eq1}
U^{m,n,\overrightarrow{b}}_{\psi,s}\left(f_1,\ldots,f_m\right)(x):=\int_{[0,1]^n}\left(\prod\limits_{k=1}^mf_k\left(s_k(t)x\right)\right)\left(\prod_{k=1}^m\left(b_k(x)-b_k\left(s_k(t)x\right)\right)\right)\psi(t)dt.
\end{equation}
\end{definition}

Our main result in this section is as follows.

\begin{theorem}\label{sec4theo1} Let $\alpha_i>-d$, $1 \leq  q\leq q_i<\infty$, $0\leq r_i$, $1\leq p_i<\infty$, $0<\beta_i<1$, $0<\beta<1$, $0\leq \lambda_i,\lambda\leq1$ for $i=1,\ldots,m$ with $\alpha=\alpha_1+\cdots+\alpha_m>-d$, $\beta=\beta_1+\cdots+\beta_m$, $\lambda=\lambda_1+\cdots+\lambda_m$,  $\frac 1 p=\frac 1{p_1}+\cdots+\frac 1{p_m}$, and $\frac1q=\frac 1{q_1}+\cdots+\frac 1{q_m}+\frac 1{r_1}+\cdots+\frac 1{r_m}$. Suppose that $b_i\in Lip^{\beta_i} $ and $\omega_i$ as in (\ref{sec3eq1}) for $i=1,\ldots,m$. Functions $s_1(t),\ldots,s_m(t)\neq0$ almost everywhere $t\in[0,1]^n$ such that 
\begin{equation}\label{sec4eq3}
\int_{[0,1]^n}\left(\prod_{i=1}^m|s_i(t)|^{-\frac{d+\gamma_i}{q_i}+\lambda_i-\alpha_i}|1-s_i(t)|^{\beta_i}\right)\psi(t)dt <\infty
\end{equation}
Then the commutator  $U^{m,n,\vec{b}}_{\psi,\vec{s}}$ is determined as a bounded operator from $M\dot{K}^{\alpha_1, \lambda_1}_{p_1, q_1}(\omega_1)\times \cdots \times M\dot{K}^{\alpha_m, \lambda_m}_{p_m, q_m}(\omega_m)$ to $M\dot{K}^{\alpha^\prime, \lambda}_{p, q}(\omega)$ when $0<p<1$ and  $\lambda > 0$ or when $1\leq p<\infty$ and $\lambda\ge0$. Here  
\begin{equation}\label{sec4eq4}
\alpha^\prime=\alpha-\sum\limits_{i=1}^m\beta_i-\sum\limits_{i=1}^m\frac{d+\gamma_i}{r_i}. 
\end{equation}
\end{theorem}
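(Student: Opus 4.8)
The plan is to reduce the commutator to the operator $U^{m,n}_{\psi,\vec s}$ already handled in Theorem \ref{sec3theo2} by absorbing the Lipschitz symbols into a pointwise growth factor. First I would use that $b_i\in Lip^{\beta_i}$ to estimate, pointwise,
\[
|b_i(x)-b_i(s_i(t)x)|\le \|b_i\|_{Lip^{\beta_i}}\,|1-s_i(t)|^{\beta_i}\,|x|^{\beta_i},
\]
so that, writing $\beta=\beta_1+\cdots+\beta_m$,
\[
\bigl|U^{m,n,\vec b}_{\psi,\vec s}(f_1,\dots,f_m)(x)\bigr|\le \Bigl(\prod_{i=1}^m\|b_i\|_{Lip^{\beta_i}}\Bigr)|x|^{\beta}\int_{[0,1]^n}\prod_{i=1}^m|f_i(s_i(t)x)|\,|1-s_i(t)|^{\beta_i}\,\psi(t)\,dt.
\]
The factor $|x|^{\beta}$, together with the extra growth produced by the auxiliary $r_i$-integrations below, is precisely what forces the shift from $\alpha$ to $\alpha'$ in (\ref{sec4eq4}); indeed $\alpha'$ is chosen so that $2^{k\alpha'}$ multiplied by these extra powers of $2^k$ restores the factor $2^{k\alpha}$ that drives the proof of Theorem \ref{sec3theo2}.

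The key step is the estimate of $\|U^{m,n,\vec b}_{\psi,\vec s}(\vec f)\chi_k\|_{q,\omega}$. Here I would apply H\"older's inequality with the $2m$ exponents $q_1/q,\dots,q_m/q,r_1/q,\dots,r_m/q$, whose reciprocals sum to $1$ by the hypothesis $\frac1q=\sum_i\frac1{q_i}+\sum_i\frac1{r_i}$. Splitting the integrand $|x|^{q\beta}\prod_i|f_i(s_i(t)x)|^q\,\omega(x)$ by assigning $|f_i(s_i(t)x)|^q\,\omega_i(x)^{q/q_i}$ to the $i$-th $q_i$-factor and $|x|^{q\beta_i}\,\omega_i(x)^{q/r_i}$ to the $i$-th $r_i$-factor (so that the product of all assigned weight powers is exactly $\omega$), the first group yields, after the change of variables $y=s_i(t)x$, the bound $|s_i(t)|^{-(d+\gamma_i)/q_i}\bigl(\|f_i\chi_{k+\ell_i-1}\|_{q_i,\omega_i}+\|f_i\chi_{k+\ell_i}\|_{q_i,\omega_i}\bigr)$, where $2^{\ell_i-1}<|s_i(t)|\le 2^{\ell_i}$ and $s_i(t)C_k\subseteq C_{k+\ell_i-1}\cup C_{k+\ell_i}$, exactly as in Theorem \ref{sec3theo2}. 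The second group gives $\bigl(\int_{C_k}|x|^{\beta_i r_i}\omega_i(x)\,dx\bigr)^{1/r_i}\approx 2^{k(\beta_i+(d+\gamma_i)/r_i)}$, by passing to polar coordinates and using the homogeneity of $\omega_i$. Collecting powers of $2^k$ produces $2^{k(\beta+\sum_i(d+\gamma_i)/r_i)}$, and multiplying by $2^{k\alpha'}$ returns $2^{k\alpha}$.

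This leaves the estimate
\[
\|U^{m,n,\vec b}_{\psi,\vec s}(\vec f)\chi_k\|_{q,\omega}\lesssim 2^{k(\beta+\sum_i(d+\gamma_i)/r_i)}\int_{[0,1]^n}\prod_{i=1}^m\bigl(\|f_i\chi_{k+\ell_i-1}\|_{q_i,\omega_i}+\|f_i\chi_{k+\ell_i}\|_{q_i,\omega_i}\bigr)\widetilde\psi_{\vec s}(t)\,dt,
\]
where $\widetilde\psi_{\vec s}(t)=\prod_i|s_i(t)|^{-(d+\gamma_i)/q_i}|1-s_i(t)|^{\beta_i}\psi(t)$, and from here the argument is identical to Theorem \ref{sec3theo2}. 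For $1\le p<\infty$ I would insert $2^{k\alpha'p}$, use $2^{k\alpha'}2^{k(\beta+\sum_i(d+\gamma_i)/r_i)}=2^{k\alpha}$, apply the Minkowski integral inequality of Lemma \ref{sec3lem1} to interchange the $\ell^p$-sum over $k$ with the $t$-integral, and then H\"older in the $p_i$ to separate the factors; for $0<p<1$ with $\lambda>0$ I would instead invoke Lemma \ref{sec3lem2} together with the summation of $\sum_{k\le k_0}2^{(k-k_0)\lambda p}=2^{\lambda p}/(2^{\lambda p}-1)$. In both cases the remaining $t$-integral is exactly $\int_{[0,1]^n}\prod_i|s_i(t)|^{-(d+\gamma_i)/q_i+\lambda_i-\alpha_i}|1-s_i(t)|^{\beta_i}\psi(t)\,dt$, finite by (\ref{sec4eq3}), which yields the claimed boundedness into $M\dot{K}^{\alpha',\lambda}_{p,q}(\omega)$.

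The main obstacle is the bookkeeping in the $2m$-fold H\"older splitting: one must distribute both the weight $\omega$ and the growth $|x|^{\beta}$ over the $q_i$- and $r_i$-slots so that the $f_i$-factors reproduce the Morrey-Herz norms while the leftover $r_i$-factors integrate to clean powers of $2^k$. For the two contributions to recombine as $2^{k\alpha}$ this forces the homogeneity of $\omega$ to satisfy $\frac{d+\gamma}{q}=\sum_i\frac{d+\gamma_i}{q_i}+\sum_i\frac{d+\gamma_i}{r_i}$, which is precisely the identity making $2^{k\alpha'}$ absorb the extra powers; verifying this compatibility, and checking $\beta_i r_i+d+\gamma_i>0$ so that the annulus integrals behave like $2^{k(\beta_i r_i+d+\gamma_i)}$, is the only genuinely new point beyond Theorem \ref{sec3theo2}.
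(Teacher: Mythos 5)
Your proposal is correct and follows essentially the same route as the paper: the Lipschitz bound producing $\prod_i|1-s_i(t)|^{\beta_i}$ times a power of $|x|$, the H\"older splitting over the $q_i$- and $r_i$-slots, the change of variables and annulus shift $s_i(t)C_k\subseteq C_{k+\ell_i-1}\cup C_{k+\ell_i}$, and then the two cases handled exactly as in Theorem \ref{sec3theo2} via Lemma \ref{sec3lem1} and Lemma \ref{sec3lem2}. The only cosmetic difference is that you carry $|x|^{\beta_i}$ into the $r_i$-H\"older factors and integrate it there, whereas the paper bounds $|x|^{\beta_i}\le 2^{k\beta_i}$ on $C_k$ at the outset; both yield the same power $2^{k(\beta_i+(d+\gamma_i)/r_i)}$.
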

When $m=n=1$,  $\omega_1=\frac1{|B_0|}$, $s_1(t)\equiv t$, let $U^b_\psi=U^{1,1,\vec{b}}_{\psi,\vec{s}}$, we obtain the following result.
\begin{corollary}
Let $\psi: [0; 1]\to[0;\infty)$ be a measurable function, $0<\beta<1$, $b\in \text{Lip}^\beta(\mathbb R^d)$, $1\leq q_2\leq q_1<\infty$. If 
\begin{equation}\label{sec4eq5}
{\mathcal A}=\int_0^1t^{-\left(\gamma_1-\lambda-\frac d{q_1}\right)}(1-t)\psi(t)dt<\infty,
\end{equation}
then $U^b_\psi$ is bounded from $M\dot{K}^{\alpha_1,\lambda}_{p,q_1}$ to $M\dot{K}^{\alpha_2,\lambda}_{p,q_2}$, where $\alpha_1=\alpha_2+\beta+d\left(\dfrac1{q_2}-\dfrac1{q_1}\right)$.
\end{corollary}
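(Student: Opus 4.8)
The plan is to deduce the corollary directly from Theorem \ref{sec4theo1} by specializing to $m=n=1$, so that no fresh estimate is required; the entire content is to fix the dictionary between the two families of indices and then check that the admissibility relation (\ref{sec4eq4}) and the integrability condition (\ref{sec4eq3}) collapse to the data of the corollary. First I would set up that dictionary. With a single function $f$, a single symbol $b_1=b$ of Lipschitz order $\beta_1=\beta$, and the single curve $s_1(t)\equiv t$, I take $p_1=p$, $\lambda_1=\lambda$, let the domain exponent $\alpha_1$ and the domain integrability $q_1$ be those named in the corollary, and identify the target integrability $q$ of Theorem \ref{sec4theo1} with the corollary's $q_2$. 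The relation $\frac1q=\frac1{q_1}+\frac1{r_1}$ then forces $\frac1{r_1}=\frac1{q_2}-\frac1{q_1}$, which is nonnegative precisely because $q_2\le q_1$; this determines $r_1$ from the data. For $m=1$ the summation constraints $\alpha=\alpha_1$, $\beta=\beta_1$, $\lambda=\lambda_1$ and $\frac1p=\frac1{p_1}$ are automatic, and the hypothesis $1\le q\le q_i$ of Theorem \ref{sec4theo1} becomes exactly $1\le q_2\le q_1$.

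Next I would dispose of the weight. Since $\omega_1=\frac1{|B_0|}$ is a positive constant it is absolutely homogeneous of degree $\gamma_1=0$, hence $\omega_1\in\mathcal W_0$, and the induced weight $\omega=\omega_1^{q/q_1}$ from (\ref{sec3eq1}) is again a positive constant, of degree $\gamma=0$. A constant weight alters the Morrey-Herz quasi-norm only by a fixed multiplicative factor, so the weighted spaces appearing in Theorem \ref{sec4theo1} coincide, with equivalent norms, with the unweighted spaces $M\dot{K}^{\alpha_1,\lambda}_{p,q_1}$ and $M\dot{K}^{\alpha_2,\lambda}_{p,q_2}$ of the corollary; boundedness is therefore unaffected by this replacement.

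It then remains to reduce (\ref{sec4eq4}) and (\ref{sec4eq3}). Putting $\gamma_1=0$ and $\frac1{r_1}=\frac1{q_2}-\frac1{q_1}$ into (\ref{sec4eq4}) gives $\alpha'=\alpha_1-\beta-\frac{d}{r_1}=\alpha_1-\beta-d\bigl(\frac1{q_2}-\frac1{q_1}\bigr)$, so the target exponent $\alpha'$ of the theorem equals the corollary's $\alpha_2$ exactly under the stated relation $\alpha_1=\alpha_2+\beta+d\bigl(\frac1{q_2}-\frac1{q_1}\bigr)$. In the same way, substituting $m=1$, $s_1(t)=t$ and $\gamma_1=0$ collapses the product in (\ref{sec4eq3}) to a single factor and leaves the one-dimensional integral $\int_0^1 t^{-\frac d{q_1}+\lambda-\alpha_1}(1-t)^{\beta}\psi(t)\,dt$, i.e. the integrability hypothesis (\ref{sec4eq5}). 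Once the remaining numerological constraints of Theorem \ref{sec4theo1} ($\alpha_1>-d$, $0<\beta<1$, $0\le\lambda\le1$) are recorded as inherited or assumed, the theorem applies in the range $0<p<1,\ \lambda>0$ or $1\le p<\infty,\ \lambda\ge0$ and yields exactly the asserted boundedness of $U^b_\psi$ from $M\dot{K}^{\alpha_1,\lambda}_{p,q_1}$ to $M\dot{K}^{\alpha_2,\lambda}_{p,q_2}$.

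I expect no genuine obstacle here, only two points that warrant care: keeping the two roles of the letter $q$ distinct (the target integrability of Theorem \ref{sec4theo1} versus the corollary's $q_2$), and confirming that collapsing the weight to a constant really lands us in the unweighted spaces at the cost of nothing more than a harmless normalization factor. Both are settled once the dictionary above is fixed, after which the corollary is immediate.
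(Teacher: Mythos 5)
Your reduction is exactly how the paper intends the corollary to be read---the paper offers no separate proof, presenting the corollary as an immediate specialization of Theorem \ref{sec4theo1}---and your dictionary is set up correctly: $m=n=1$, $s_1(t)\equiv t$, $\omega_1$ constant so that the homogeneity degree is $\gamma_1=0$, target integrability $q=q_2$, $1/r_1=1/q_2-1/q_1\ge 0$ from $q_2\le q_1$, and $\alpha'=\alpha_1-\beta-d\left(\frac1{q_2}-\frac1{q_1}\right)=\alpha_2$ via (\ref{sec4eq4}). The one step that fails under a literal reading is your final identification ``i.e.\ the integrability hypothesis (\ref{sec4eq5})''. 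What your specialization of (\ref{sec4eq3}) actually yields is $\int_0^1 t^{\lambda-\alpha_1-\frac d{q_1}}(1-t)^{\beta}\psi(t)\,dt<\infty$, whereas (\ref{sec4eq5}) as printed is $\int_0^1 t^{-\left(\gamma_1-\lambda-\frac d{q_1}\right)}(1-t)\psi(t)\,dt<\infty$: the exponent of $t$ differs in the sign of $d/q_1$, the letter $\gamma_1$ there is the notation of \cite{TXZ} for the domain Herz exponent (the corollary's $\alpha_1$) rather than a weight degree, and---the substantive point---the factor is $(1-t)$ to the first power, not $(1-t)^{\beta}$.

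The first two mismatches are evidently notational slips inherited from \cite{TXZ}, but the third is not cosmetic and cannot be waved through. Since $0<\beta<1$, one has $(1-t)\le(1-t)^{\beta}$ on $[0,1]$, so the printed hypothesis $\mathcal A<\infty$ is \emph{strictly weaker} than what Theorem \ref{sec4theo1} requires: taking, say, $\psi(t)=(1-t)^{-1-\eta}$ with $\beta<\eta<1$ and parameters with $\lambda-\alpha_1-d/q_1=0$, one gets $\mathcal A<\infty$ while the specialized (\ref{sec4eq3}) diverges. Hence the corollary as printed does not follow from the theorem, and your assertion that the two integrals coincide papers over exactly the point you flagged as needing care. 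The correct conclusion of your (otherwise sound) derivation is the corollary with hypothesis $\int_0^1 t^{\lambda-\alpha_1-\frac d{q_1}}(1-t)^{\beta}\psi(t)\,dt<\infty$; note that this corrected condition is still majorized by the condition $\mathcal C$ of \cite{TXZ} (because $(1-t)^{\beta}\le 1$), and the paper's example $\psi(t)=t/(1-t)$ still separates the two, since $\int_0^1(1-t)^{\beta-1}dt=1/\beta<\infty$, so the claimed improvement over \cite{TXZ} survives the correction. You should either state the corollary with $(1-t)^{\beta}$ or explicitly record the discrepancy, rather than identifying your derived condition with (\ref{sec4eq5}) as written.
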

In \cite{TXZ}, to obtain the boundedness of $U^b_\psi$ from $M\dot{K}^{\gamma_1,\lambda}_{p,q_1}$ to $M\dot{K}^{\gamma_2,\lambda}_{p,q_2}$, the authors required a sufficient condition condition on $\psi$ that
\[
\mathcal C=\int_0^1t^{-\left(\gamma_1-\lambda-\frac d{q_1}\right)}\psi(t)dt<\infty.
\] 
Since $0\leq t\leq 1$, then $\mathcal A\leq \mathcal C$. In fact by choosing $\psi(t)=\frac t{1-t}$, $\gamma_1-\lambda-\frac d{q_1}=1$ then it is easy to see that $\mathcal C=\infty$ but $\mathcal A<\infty$. Thus our results extend and strengthen results in \cite{TXZ}. 
\vskip12pt
\begin{proof}[Proof of Theorem \ref{sec4theo1}.]
 If $f$ and $g$ are expressions, we will write $f\lesssim g$ if there is some constant $C$ such that $f\le C g$. To simplify the proof we denote ${\mathcal B}=\prod\limits_{i=1}^m\|b_i\|_{Lip^{\beta_i}}$, 
\[\widetilde{\psi}(t)=\left(\prod\limits_{i=1}^m|1-s_i(t)|^{\beta_i}\right)\psi(t),\]
and
\[
\overline{\psi}(t)=\left(\prod\limits_{i=1}^m|s_i(t)|^{-\frac{d+\gamma_i}{q_i}}\right)\widetilde{\psi}(t),
\] 

\vskip12pt
For any $x\in C_k$ and $b_i\in Lip^{\beta_i}$, it is easy to check that
\begin{align}\label{sec4eq6}
\prod\limits_{i=1}^m\left|b_i(x)-b_i(s_i(t)x)\right|
\leq & {\mathcal B}\cdot\left(\prod\limits_{i=1}^m\left|s_i(t)-1\right|^{\beta_i}\right)\cdot2^{k\left(\beta_1+\cdots+\beta_m\right)}.
\end{align}
Hence,
\begin{equation}
\begin{split}
&\qquad \|U^{m,n,\vec{b}}_{\psi,\vec{s}}(f_1, \ldots,f_m)\chi_k\|_{q,\omega}\\
&\leq \int_{[0,1]^n}\left(\int_{C_k}\prod\limits_{i=1}^m\left|f_i(s_i(t)x) \right|^q\prod\limits_{i=1}^m\left|b_i(x)-b_i(s_i(t)x)\right|^q\omega(x)dx\right)^{1/q}\psi(t)dt\\
&\lesssim2^{k(\beta_1+\cdots+\beta_m)}\,{\mathcal B} \int_{[0,1]^n}\left(\int_{C_k}\prod\limits_{i=1}^m\left|f_i(s_i(t)x) \right|^q\omega(x)dx\right)^{1/q}\widetilde{\psi}(t)dt\\
&\lesssim2^{k(\beta_1+\cdots+\beta_m)}\,{\mathcal B} \int_{[0,1]^n}\prod\limits_{i=1}^m\left(\int_{C_k}\left|f_i(s_i(t)x) \right|^{q_i}\omega_i(x)dx\right)^{1/{q_i}}\prod\limits_{i=1}^m\left(\int_{C_k}\omega_i(x)dx\right)^{1/{r_i}}\widetilde{\psi}(t)dt,\\
\end{split}
\notag
\end{equation}
where the last inequality is obtained by using H\"{o}lder's inequality. Noting that $\omega_i\in{\mathcal W}_{\gamma_i}$ gives 
\begin{equation}\label{sec4eq7}
\int_{C_k}\omega_i(x)dx\lesssim 2^{k(d+\gamma_i)}.
\end{equation}
Hence, by simple change of variables using (\ref{sec4eq5}), we get
\begin{align*}
&\qquad \|U^{m,n,\vec{b}}_{\psi,\vec{s}}(f_1, \ldots,f_m)\chi_k\|_{q,\omega}\\
&\;\lesssim2^{k(\beta_1+\cdots+\beta_m)}\,{\mathcal B} \int_{[0,1]^n}\prod\limits_{i=1}^m\left(\int_{s_i(t)C_k}\left|f_i(y) \right|^{q_i}|s_i(t)|^{-d-\gamma_i}\omega_i(y)dy\right)^{1/{q_i}}\prod\limits_{i=1}^m\left(\omega_i(C_k)\right)^{1/{r_i}}\widetilde{\psi}(t)dt\\
&\;\lesssim2^{k\left(\alpha-\alpha^\prime\right)}\,{\mathcal B}\int_{[0,1]^n}\prod\limits_{i=1}^m\left(\int_{s_i(t)C_k}\left|f_i(x) \right|^{q_i}\omega_i(x)dx\right)^{1/{q_i}}\overline{\psi}(t)dt.
\end{align*}
 Since $s_i(t)\ne 0$ almost everywhere $t\in [0,1]^n$, we could find an interger $\ell_i$ such that $2^{\ell_i-1}<s_i(t)\leq2^{\ell_i}$. Thus $s_i(t)x\in C_{k+\ell_i-1}\cup C_{k+\ell_i}$ for any $x\in C_k$. Therefore
\begin{equation}\label{sec4eq8}
\|U^{m,n,\vec{b}}_{\psi,\vec{s}}(f_1, \ldots,f_m)\chi_k\|_{q,\omega}
\lesssim2^{k\left(\alpha-\alpha^\prime\right)}\,{\mathcal B}\int_{[0,1]^n}\prod\limits_{i=1}^m\left(\|f_i\chi_{k+\ell_i-1}\|_{q_i,\omega_i}+\|f_i\chi_{k+\ell_i}\|_{q_i,\omega_i}\right)\overline{\psi}(t)dt.
\end{equation}
Now we consider the following cases:
\vskip12pt 
{\bf Case 1:} Suppose that $1\leq p<\infty$ and $\lambda\ge0$.  
By Lemma \ref{sec3lem1}, we have
\begin{align*}
&\|U^{m,n,\vec{b}}_{\psi,\vec{s}}(f_1, \ldots,f_m)\|_{M\dot{K}^{\alpha^\prime, \lambda}_{p, q}(\omega)}\\
=\;& \sup\limits_{k_0\in \mathbb Z}2^{-k_0\lambda}\left(\sum_{k=-\infty}^{k_0}2^{kp\alpha^\prime}\|U^{m,n,\vec{b}}_{\psi,\vec{s}}(f_1, \ldots,f_m)\chi_k\|_{q,\omega}^p\right)^{1/p}\\
\lesssim\;&{\mathcal B}\sup\limits_{k_0\in \mathbb Z}2^{-k_0\lambda} \left(\sum_{k=-\infty}^{k_0}2^{kp\alpha}\left(\int_{[0,1]^n}\prod\limits_{i=1}^m\left(\|f_i\chi_{k+\ell_i-1}\|_{q_i,\omega_i}+\|f_i\chi_{k+\ell_i}\|_{q_i,\omega_i}\right)\overline{\psi}(t)dt\right)^p\right)^{1/p}\\
\lesssim\;&{\mathcal B}\sup\limits_{k_0\in \mathbb Z}2^{-k_0\lambda} \int_{[0,1]^n} \left(\sum_{k=-\infty}^{k_0}2^{kp\alpha}\prod\limits_{i=1}^m\left(\|f_i\chi_{k+\ell_i-1}\|_{q_i,\omega_i}+\|f_i\chi_{k+\ell_i}\|_{q_i,\omega_i}\right)^p\right)^{1/p}\overline{\psi}(t)dt\\
\lesssim\;&{\mathcal B}\sup\limits_{k_0\in \mathbb Z}2^{-k_0\lambda} \int_{[0,1]^n} \left(\sum_{k=-\infty}^{k_0}2^{kp\alpha}\prod\limits_{i=1}^m\left(\|f_i\chi_{k+\ell_i-1}\|^p_{q_i,\omega_i}+\|f_i\chi_{k+\ell_i}\|^p_{q_i,\omega_i}\right)\right)^{1/p}\overline{\psi}(t)dt.\\
\end{align*}
It is worth noting that
\[
\prod\limits_{i=1}^m(a_{i,0}+b_{i,0})=\sum\limits_{j_1,\ldots,j_m=0,1}\prod\limits_{i=1}^ma_{i,j_i},
\]
where the sum is taken over all $(j_1,\ldots,j_m)$ for which $j_1,\ldots,j_m\in\{0,1\}$. Thus,
\begin{align*}
&\|U^{m,n,\vec{b}}_{\psi,\vec{s}}(f_1, \ldots,f_m)\|_{M\dot{K}^{\alpha^\prime, \lambda}_{p, q}(\omega)}\\
\lesssim\;&{\mathcal B}\sup\limits_{k_0\in \mathbb Z}2^{-k_0\lambda}\int_{[0,1]^n} \left(\sum_{k=-\infty}^{k_0}2^{kp\alpha}\left(\sum_{j_1,\ldots,j_m=0,1}\prod\limits_{i=1}^m\|f_i\chi_{k+\ell_i-j_i}\|^p_{q_i,\omega_i}\right)\right)^{1/p}\overline{\psi}(t)dt\\
\lesssim\;&{\mathcal B}\sup\limits_{k_0\in \mathbb Z}2^{-k_0\lambda}\int_{[0,1]^n} \left(\sum\limits_{j_1,\ldots,j_m=0,1}\left(\sum_{k=-\infty}^{k_0}2^{kp\alpha}\prod\limits_{i=1}^m\|f_i\chi_{k+\ell_i-j_i}\|^p_{q_i,\omega_i}\right)\right)^{1/p}\overline{\psi}(t)dt\\
\lesssim\;&{\mathcal B}\sup\limits_{k_0\in \mathbb Z}2^{-k_0\lambda}\int_{[0,1]^n}\sum\limits_{j_1,\ldots,j_m=0,1}\left(\sum_{k=-\infty}^{k_0}2^{kp\alpha}\prod\limits_{i=1}^m\|f_i\chi_{k+\ell_i-j_i}\|^p_{q_i,\omega_i}\right)^{1/p}\overline{\psi}(t)dt\\
\lesssim\;&{\mathcal B}\sup\limits_{k_0\in \mathbb Z}2^{-k_0\lambda}\int_{[0,1]^n}\sum_{j_1,\ldots,j_m=0,1}\left(\prod\limits_{i=1}^m\left(\sum_{k=-\infty}^{k_0}2^{kp_i\alpha_i}\|f_i\chi_{k+\ell_i-j_i}\|^{p_i}_{q_i,\omega_i}\right)\right)^{1/{p_i}}\overline{\psi}(t)dt\\
\lesssim\;&{\mathcal B}\int_{[0,1]^n}\sum_{j_1,\ldots,j_m=0,1}\left(\prod\limits_{i=1}^m\sup\limits_{k_0\in \mathbb Z}2^{-k_0\lambda_i}\left(\sum_{k=-\infty}^{k_0}2^{kp_i\alpha_i}\|f_i\chi_{k+\ell_i-j_i}\|^{p_i}_{q_i,\omega_i}\right)\right)^{1/{p_i}}\overline{\psi}(t)dt\\
\lesssim\;&{\mathcal B}\int_{[0,1]^n}\sum_{j_1,\ldots,j_m=0,1}\left(\prod\limits_{i=1}^m\|f_i\|_{M\dot{K}^{\alpha_i, \lambda_i}_{p_i, q_i}(\omega_i)}\left(\prod\limits_{i=1}^m 2^{(\ell_i-j_i)\lambda_i}\right)\left(\prod\limits_{i=1}^m2^{(-\ell_i+j_i)\alpha_i}\right)\right)\overline{\psi}(t)dt\\
\lesssim\;&{\mathcal B}\int_{[0,1]^n}\sum_{j_1,\ldots,j_m=0,1}\left(\prod\limits_{i=1}^m\|f_i\|_{M\dot{K}^{\alpha_i, \lambda_i}_{p_i, q_i}(\omega_i)}\right)\left(\prod\limits_{i=1}^m|s_i(t)|^{\lambda_i-\alpha_i}\right)\overline{\psi}(t)dt\\
\lesssim\;&{\mathcal B}\left(\prod\limits_{i=1}^m\|f_i\|_{M\dot{K}^{\alpha_i, \lambda_i}_{p_i, q_i}(\omega_i)}\right)\int_{[0,1]^n}\sum_{j_1,\ldots,j_m=0,1}\left(\prod\limits_{i=1}^m|s_i(t)|^{\lambda_i-\alpha_i}\right)\overline{\psi}(t)dt\\
\lesssim\;&{\mathcal B}\left(\prod\limits_{i=1}^m\|f_i\|_{M\dot{K}^{\alpha_i, \lambda_i}_{p_i, q_i}(\omega_i)}\right)\int_{[0,1]^n}\left(\prod\limits_{i=1}^m|s_i(t)|^{\lambda_i-\alpha_i}\right)\overline{\psi}(t)dt.\\
\end{align*}
From the hypothesis (\ref{sec4eq3}), we deduce that $\int\limits_{[0,1]^n}\left(\prod\limits_{i=1}^m|s_i(t)|^{\lambda_i-\alpha_i}\right)\overline{\psi}(t)dt$ is finite. Thus $U^{m,n,\vec{b}}_{\psi,\vec{s}}$ is bounded from $M\dot{K}^{\alpha_1, \lambda_1}_{p_1, q_1}(\omega_1)\times \cdots \times M\dot{K}^{\alpha_m, \lambda_m}_{p_m, q_m}(\omega_m)$ to $M\dot{K}^{\alpha^\prime, \lambda}_{p, q}(\omega)$. 
\vskip12pt
{\bf Case 2:} Suppose that $0<p<1$ and $\lambda>0$. The following lemma will be useful for proving the second case. By (\ref{sec4eq6}) and Lemma \ref{sec3lem2}, we have 
\begin{align*}
&\|U^{m,n,\vec{b}}_{\psi,\vec{s}}(f_1, \ldots,f_m)\|_{M\dot{K}^{\alpha^\prime, \lambda}_{p, q}(\omega)}\\
\lesssim\;&{\mathcal B}\sup\limits_{k_0\in \mathbb Z}2^{-k_0\lambda} \left(\sum_{k=-\infty}^{k_0}2^{kp\alpha}\left(\int_{[0,1]^n}\prod\limits_{i=1}^m\left(\|f_i\chi_{k+\ell_i-1}\|_{q_i,\omega_i}+\|f_i\chi_{k+\ell_i}\|_{q_i,\omega_i}\right)\overline{\psi}(t)dt\right)^p\right)^{1/p}\\
\lesssim\;&{\mathcal B}\sup\limits_{k_0\in \mathbb Z}2^{-k_0\lambda}\left(\sum_{k=-\infty}^{k_0}2^{kp\alpha}\left(\int_{[0,1]^n}\prod\limits_{i=1}^m\left(2^{(k+\ell_i-1)(\lambda_i-\alpha_i)}+2^{(k+\ell_i)(\lambda_i-\alpha_i)}\right)\|f_i\|_{M\dot{K}^{\alpha_i,\lambda_i}_{p_i,q_i}(\omega_i)}\overline{\psi}(t)dt\right)^p\right)^{1/p}\\
\lesssim\;&{\mathcal B}\left(\prod\limits_{i=1}^m\|f_i\|_{M\dot{K}^{\alpha_i, \lambda_i}_{p_i, q_i}(\omega_i)}\right)\cdot\sup_{k_0\in\mathbb Z}2^{-k_0\lambda}\left(\sum\limits_{k=-\infty}^{k_0}2^{kp\alpha}\cdot\prod\limits_{i=1}^m2^{k(\lambda_i-\alpha_i)p}\left(\int\limits_{[0,1]^n}\prod\limits_{i=1}^m|s_i(t)|^{\lambda_i-\alpha_i}\overline{\psi}(t)dt\right)^p\right)^{1/p}\\
\lesssim\;&{\mathcal B}\left(\prod\limits_{i=1}^m\|f_i\|_{M\dot{K}^{\alpha_i, \lambda_i}_{p_i, q_i}(\omega_i)}\right)\left(\int\limits_{[0,1]^n}\prod\limits_{i=1}^m|s_i(t)|^{\lambda_i-\alpha_i}\overline{\psi}(t)dt\right)\sup_{k_0\in\mathbb Z}\left(\sum\limits_{k=-\infty}^{k_0}2^{(k-k_0)\lambda p}\right)^{1/p}.
\end{align*}
We here remind that $\lambda=\lambda_1+\cdots+\lambda_m$ and $\alpha=\alpha_1+\cdots+\alpha_m$.  Since $\lambda>0$, the series $\sum\limits_{k=-\infty}^{k_0}2^{(k-k_0)\lambda p}$ is convergent and its sum is a constant, not depend on $k_0$. Therefore
\[
\|U^{m,n,\vec{b}}_{\psi,\vec{s}}(f_1, \ldots,f_m)\|_{M\dot{K}^{\alpha^\prime, \lambda}_{p, q}(\omega)}\lesssim{\mathcal B}\left(\prod\limits_{i=1}^m\|f_i\|_{M\dot{K}^{\alpha_i, \lambda_i}_{p_i, q_i}(\omega_i)}\right)\left(\int\limits_{[0,1]^n}\prod\limits_{i=1}^m|s_i(t)|^{\lambda_i-\alpha_i}\overline{\psi}(t)dt\right).
\]
This completes our proof of Theorem \ref{sec4theo1}.
\end{proof}

\vskip12pt\noindent
{\bf Acknowledgement.} This paper is supported by Vietnam National Foundation for Science and Technology Development (NAFOSTED) under grant number 101.02 - 2014.51.


\end{document}